\documentclass[11pt,a4paper]{amsart}

\usepackage[T1]{fontenc}
\usepackage{lmodern}
\usepackage{mathtools,amssymb}
\usepackage{microtype}
\usepackage{needspace}
\usepackage[a4paper,hmargin=30mm,vmargin=28mm]{geometry}
\usepackage[hidelinks]{hyperref}
\hypersetup{pdftitle={A question of Ulam and a theorem of de Rham},pdfauthor={Alexander Lytchak},
  pdfsubject={Product decompositions of complete metric spaces}}

\newtheorem{theorem}{Theorem}[section]
\newtheorem{proposition}[theorem]{Proposition}
\newtheorem{lemma}[theorem]{Lemma}
\newtheorem{corollary}[theorem]{Corollary}
\newtheorem{mainthm}{Theorem}

\theoremstyle{definition}

\theoremstyle{remark}

\numberwithin{equation}{section}
\newcommand{\R}{\mathbb R}
\newcommand{\Q}{\mathbb Q}

\newcommand{\cP}{\mathcal P}
\newcommand{\cB}{\mathcal B}
\newcommand{\id}{\mathrm{id}}
\newcommand{\spn}{\operatorname{span}}

\newcommand{\im}{\operatorname{im}}

\allowdisplaybreaks[1]
\title{A question of Ulam and a theorem of de Rham}
\author{Alexander Lytchak}
\keywords{Direct product, uniqueness of factors, $L^2$-projections}

\begin{document}
\begin{abstract}
	We generalize de Rham's decomposition theorem to proper metric spaces:
	every proper metric space has a unique decomposition as the product of
	a Euclidean space and a finite or countable pointed $\ell^2$-product
	of nontrivial irreducible non-Euclidean factors. More generally,
	every complete metric space splits canonically into a
	Hilbert factor, a pointed $\ell^2$-product of irreducible non-Euclidean factors,
	and a factor with no nontrivial irreducible factors. These results
	partially answer a question of Ulam.
\end{abstract}

\maketitle

\section{Introduction}

Throughout this paper, products of metric spaces carry the
$\ell^2$-product metric:
\begin{equation*}
 d\bigl((a,b),(a',b')\bigr)^2
   =d(a,a')^2+d(b,b')^2.
\end{equation*}

Stanis\l aw Ulam asked whether, for metric spaces $X$ and $Y$, the
existence of an isometry
\[
 X\times X\cong Y\times Y
\]
implies that $X$ and $Y$ are isometric; see
\cite[Problem~77(b)]{Mau81} and \cite{Ula60}.
He also asked analogous questions for topological spaces and groups;
see \cite[Problem~77(a), (c)]{Mau81}. Related decomposition questions
in other categories are discussed in \cite[Remark~1.1]{FL08}.

Fournier~\cite{Fou71} showed that $X=\Q$ and  $Y=\sqrt2\Q$, both with the metrics
inherited from  $\R$, provide  a negative answer to Ulam's question in general. 
The question was subsequently considered for complete and compact
metric spaces; see \cite[commentary on Problem~77(b)]{Mau81}. Our first result gives
an affirmative answer for proper metric spaces:

\begin{mainthm}\label{thm:roots}
Let $A,B$ be proper metric spaces and let $n$ be a positive integer. Then
\[
 A^n\cong B^n\quad\Longrightarrow\quad A\cong B.
\]
\end{mainthm}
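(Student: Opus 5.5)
The plan is to derive Theorem~\ref{thm:roots} from the decomposition theorem announced in the abstract. Every proper metric space $X$ has a unique decomposition
\[
 X\cong \R^k\times X_1\times\dots\times X_m\quad(\text{or a countable pointed }\ell^2\text{-product}),
\]
where the $X_i$ are nontrivial, irreducible and non-Euclidean. Uniqueness means that the Euclidean rank $k$ and the multiset of isometry classes of the factors $X_i$, with multiplicities, are determined by $X$. Since $A$ is proper, each factor of $A$ is proper and the countable product has only finitely many factors of diameter bounded away from zero, but I do not expect to need this.

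The argument has three steps.

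\textbf{Step 1: the decomposition of $A^n$.} Write $A\cong\R^k\times\prod_{i\in I}A_i$ with $I$ finite or countable. Then
\[
 A^n\cong\R^{nk}\times\prod_{j=1}^n\prod_{i\in I}A_i .
\]
This is again a proper space. It is of the canonical form, because a product of Euclidean spaces is Euclidean and the irreducible non-Euclidean factors simply get repeated. If $I$ is infinite we are taking $n$ copies of a pointed $\ell^2$-product. Reindexing gives a pointed $\ell^2$-product over $I\times\{1,\dots,n\}$, and the base points match.

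\textbf{Step 2: apply uniqueness.} Do the same for $B\cong\R^l\times\prod_{j\in J}B_j$. Uniqueness of the decomposition of $A^n\cong B^n$ gives $nk=nl$, so $k=l$. It also gives that for every isometry class $C$ of irreducible non-Euclidean spaces, $n\cdot\#\{i:A_i\cong C\}=n\cdot\#\{j:B_j\cong C\}$. Such a count could be infinite. Infinite multiplicity is impossible for proper spaces when the factors have positive diameter, since infinitely many isometric copies of a nontrivial space in a pointed $\ell^2$-product contradict local compactness. In any case cardinalities divide by $n$ in the finite case, and they agree as $\aleph_0$ otherwise. So the multisets coincide, and we get a bijection $\sigma:I\to J$ with $A_i\cong B_{\sigma(i)}$.

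\textbf{Step 3: assemble the isometry.} Take the product of the isometries $A_i\to B_{\sigma(i)}$ and $\R^k\to\R^k$. In the countable case we must make the chosen isometries respect the base points of the pointed $\ell^2$-products. The main obstacle is exactly this point: the pointed product depends on base points only up to the choice that makes it well-defined, i.e.\ the points at finite distance. I would first try to show that the canonical pointed product is independent of the chosen base points within the distinguished component, presumably part of the uniqueness statement. Failing that, I would argue directly: an isometry $A^n\cong B^n$ restricted to the first diagonal factor sends base points to points at finite $\ell^2$-distance, which yields compatible isometries factor by factor.
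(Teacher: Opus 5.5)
\textbf{There is a genuine gap in Step~3: the multiset of factors does not determine the space.}

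Steps~1--2 are correct, and they suffice when there are finitely many factors. The multiset of isometry classes of factors is a true invariant of $X$, but it is not a complete one for countable pointed products. An isometry $\prod_i(A_i,a_i)\to\prod_j(B_j,b_j)$ needs a bijection $\tau:I\to J$ \emph{together with} isometries $\phi_i:A_i\to B_{\tau(i)}$ such that $\sum_i d(\phi_i(a_i),b_{\tau(i)})^2<\infty$. Equality of multisets gives no control over these displacements.

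Here is a concrete example. Let $C=\{m^2:m\ge0\}\subset\R$. It is irreducible, since no subset of $\R$ contains a nondegenerate rectangle. It is not isometric to $\R$, and its isometry group is trivial. The spaces $\prod_{m\ge1}(C,m^2)$ and $\prod_{m\ge1}(C,4m^2)$ are proper, because all their balls are finite. Both have factor multiset $C$ with multiplicity $\aleph_0$. Yet they are not isometric. By uniqueness of factor fibers, an isometry between them would need a bijection $\pi$ of $\{1,2,\ldots\}$ with $\sum_m(m^2-4\pi(m)^2)^2<\infty$. That forces $m=2\pi(m)$ for almost all $m$, which fails for odd $m$. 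The same example refutes your remark that infinite multiplicity cannot occur in a proper space.

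Neither of your suggested repairs closes the gap:
\begin{itemize}
\item Independence of the pointed product from base points in the same $\ell^2$-class is true but irrelevant. Abstractly chosen $\phi_i$ need not move $(a_i)$ into the class of $(b_j)$.
\item $F$ restricted to $A\times\{a\}^{n-1}$ does not land in a copy of $B$. Its irreducible fibers are sent to $B_j$-fibers in various coordinates of $B^n$, with some $j$ hit several times and others not at all.
\end{itemize}

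\textbf{The missing idea is to keep the isometries that $F:A^n\to B^n$ actually induces, and then choose among them combinatorially.} This is what the paper does.
\begin{itemize}
\item After splitting off $\R^k$, uniqueness of the factor fibers at $(a,\ldots,a)$ and at $F(a,\ldots,a)$ gives a bijection $\sigma:I\times\{1,\ldots,n\}\to J\times\{1,\ldots,n\}$.
\item Whenever $\sigma(i,k)=(j,l)$, it also gives an isometry $\phi_{i,k}:A_i\to B_j$. The squared displacements $d(\phi_{i,k}(a_i),b_j)^2$ sum to $d(F(a,\ldots,a),(b,\ldots,b))^2<\infty$.
\item These pairs are the edges of an $n$-regular bipartite multigraph on $I$ and $J$. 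Hall's marriage theorem for locally finite infinite multigraphs gives a perfect matching $\tau:I\to J$, that is, one induced isometry $\phi_i:A_i\to B_{\tau(i)}$ for each $i$.
\item The selected displacements form a subfamily of a square-summable family. Hence $\prod_i\phi_i$, together with the identity on $\R^k$, is an isometry $A\to B$.
\end{itemize}
Without this matching step, or an equivalent argument, the countable case remains unproved.
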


This result is a consequence of a decomposition theorem generalizing
a classical theorem of de Rham. We call a metric space \emph{irreducible}
if it has more than one point and every decomposition into two factors
has a one-point factor.
The theorem of de Rham~\cite{dR52} together with its extension by Eschenburg and Heintze~\cite{EH98} states that a  complete Riemannian manifold has a unique decomposition as
a product of a Euclidean space and non-Euclidean irreducible
Riemannian manifolds. Analogous results were obtained for compact
subsets of Euclidean spaces by Moszy\'nska~\cite{Mos92}, for
finite metric spaces by Avgustinovich and Fon-Der-Flaass~\cite{AF00}, for
finite-dimensional geodesic metric spaces by Foertsch and
Lytchak~\cite[Theorem~1.1]{FL08}, and for some CAT(0) spaces by
Caprace and Monod~\cite{CM09}. Here we prove existence
and uniqueness for arbitrary proper metric spaces, with no
geodesicity, curvature, or dimension assumptions.

\begin{mainthm}\label{thm:proper}
	Every proper metric space $X$ admits a decomposition
	\[
	X\cong\R^m\times\prod_{i\in I}(X_i,o_i),
	\]
	where $m\ge0$, $I$ is finite or countable, and the $X_i$ are nontrivial
	irreducible proper metric spaces not isometric to $\R$. For each
	basepoint, the Euclidean factor fiber and the $X_i$-fibers are uniquely
	determined, up to permutation of the $X_i$-fibers.
\end{mainthm}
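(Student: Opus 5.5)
The plan has two parts: first a structural description of all product decompositions of $X$, then existence and uniqueness read off from it. For a basepoint $o\in X$, a splitting $X\cong A\times B$ is determined by its fibers through $o$. We would encode each splitting by its pair of ``projections'' $P_A,P_B\colon X\to X$: $P_A(x)$ is the point of the $A$-fiber through $o$ nearest to $x$, so that $d(x,o)^2=d(P_Ax,o)^2+d(P_Bx,o)^2$. We then think of these as commuting ``orthogonal projections'' in a lattice of factors.

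The central lemma we would aim for is a common-refinement statement. Given two splittings $X=A\times B=A'\times B'$, the projections should commute, and $X$ should split as the fourfold product of the fibers $A\cap A'$, $A\cap B'$, $B\cap A'$, $B\cap B'$ through $o$. This holds up to a Euclidean correction: in de Rham's theorem the failure of uniqueness comes exactly from the Euclidean part, where factors can sit diagonally. So we would first isolate a maximal Euclidean (more generally Hilbert) factor. Call a factor a \emph{line factor} if it is isometric to $\R$. Take the closed span of all line factors through $o$; by properness it is finite dimensional, say $\R^m$, and we would show that it is itself a factor $E$ with $X\cong \R^m\times Y$, where $Y$ has no $\R$ factor. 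To prove the refinement lemma for $Y$, we would study the map $P_A\circ P_{A'}$ restricted to fibers. Write $x=(a,b)$ in the first splitting and expand $d$ in the second. The Pythagorean identities then force the coordinate maps to be ``affine in the $\ell^2$ sense''. Any non-commuting part would produce a rotation-like family of isometries, and this would yield a line factor, which contradicts the absence of an $\R$ factor. This step is the main obstacle. Without geodesics or linear structure, we must extract an $\R$ factor purely from metric identities, for instance by showing that the non-commutation defines a one-parameter family whose orbit through $o$ is an isometric copy of $\R$ that splits off. We would first try a Cauchy--Schwarz equality-case argument on the function $x\mapsto d(x,o)^2$ along fibers, which is the metric analogue of the parallelogram law.

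Given the refinement lemma, the factors of $Y$ through $o$ form a Boolean algebra of commuting projections. Irreducible factors correspond to atoms, and distinct atoms are orthogonal. Existence comes next. By properness, any strictly decreasing chain of factors contains one of positive diameter bounded below by the $r$ at which the ball $B(o,r)$ is nontrivially split. Compactness of balls then lets us take limits of descending chains: intersections of factors are factors, with projections obtained as limits of commuting projections on compact sets. Zorn's lemma therefore gives atoms below every nonzero element. There are at most countably many atoms, because the fibers through $o$ are pairwise orthogonal closed subsets, and in a separable proper space only countably many of them can have a point at distance $\ge 1/k$ from $o$ for each $k$. Finally, the map $Y\to \prod (X_i,o_i)$, $y\mapsto (P_i y)$, is an isometry onto the pointed $\ell^2$-product. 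It preserves distances because the sum of squared projections exhausts $d^2$: the complement of the supremum of the atoms is a factor with no atoms, hence trivial. Surjectivity follows from completeness, by taking limits of finite partial products.

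Uniqueness follows from the same structure. Any decomposition as in Theorem~\ref{thm:proper} has its Euclidean fiber contained in $E$ and of full dimension, since an irreducible non-$\R$ factor contributes no line factors. It therefore equals $E$. Its $X_i$-fibers are atoms of the Boolean algebra and hence coincide, up to permutation, with the atoms found above.
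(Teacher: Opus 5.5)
Your outline has the same architecture as the paper: split off the Euclidean factor, prove commutation for the line-free cofactor, pass to the Boolean algebra of factor projections, and take atoms. But the decisive step is the one you leave as ``the main obstacle'': that two splittings of the line-free complete cofactor $Y$ have commuting projections. The tools you suggest for it cannot suffice as stated. Pythagorean identities along fibers and a parallelogram-type equality case never use completeness, and completeness is indispensable. In $\Q^2$, both the coordinate splitting and the splitting along $u=(3/5,4/5)$, $u^\perp=(-4/5,3/5)$ are decompositions $\Q\times\Q$, since the rotation has rational entries. Their projections do not commute: $p(q(e_1))=\tfrac{9}{25}e_1\neq\tfrac35u=q(p(e_1))$. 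Yet $\Q^2$, being countable, has no $\R$ factor.

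The missing idea is a structure in which non-commutation can be measured and then realized by points. The paper linearizes. Set $k_x(z)=\frac12(r(x)^2+r(z)^2-d(x,z)^2)$ and $V=\spn\{k_x\}$. The form $\beta(k_x,k_y)=k_x(y)$ is nondegenerate, each factor projection induces a $\beta$-symmetric projection $Pf=f\circ p$, and $p\circ q=q\circ p$ iff $PQ=QP$. The functional $\Phi(f)=\sup_z(2f(z)-r(z)^2)$ satisfies $\Phi(f)=\Phi(Pf)+\Phi((I-P)f)$ for every splitting. If $PQ\neq QP$, then $h=(I-P)Qv\neq0$ for some $v\in\im P$, and a second-difference identity forces $\Phi(f+th)=\Phi(f)+2t\beta(f,h)+t^2\beta(h,h)$ with $\beta(h,h)>0$. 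Only then does completeness enter, through the criterion that $w=k_z$ for some $z$ iff $\Phi(f)+\Phi(w)\ge2\beta(f,w)$ for all $f$ (near-maximizers of $2w-r^2$ form a Cauchy sequence). This yields $k_x+th=k_{\tau_t(x)}$, and the maps $\tau_t$ translate along splitting lines. Your ``one-parameter family'' is the right target, but nothing in your sketch produces $h$ or the points $\tau_t(x)$.

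Your existence step also fails as written. Zorn's lemma on nonzero factors below $p$ needs every descending chain of nonzero factors to have nontrivial intersection, and no lower bound on their size holds even in compact spaces. Take the compact pointed product $\prod_{i\ge1}(\{0,1/i\},0)$. Its tails $\prod_{i\ge n}$ form a descending chain of nonzero factors whose diameters tend to $0$ and whose intersection is $\{o\}$. Atoms exist there, but they are not obtained as limits of chains. The paper gets atomicity globally instead: by Arzel\`a--Ascoli, $\cP$ is a compact metrizable topological Boolean algebra, hence atomic by the theorem of Papert Strauss. Once atomicity is in place, your countability argument and the assembly of the pointed $\ell^2$-product go through.

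Finally, you take two facts for granted: the existence and uniqueness of the maximal Euclidean factor, and the fact that a pointed product of line-free spaces is line-free (used in your uniqueness step). Both come from Foertsch--Lytchak--Soultanis and must be cited or proved. Without that theory, the ``closed span of line factors'' has no meaning in a metric space.
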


Here and below, infinite products mean pointed $\ell^2$-products; see
Subsection~\ref{subsec:infinite-products} for the precise convention.
The uniqueness statement in Theorem~\ref{thm:proper} is much stronger
than uniqueness of the abstract factors $X_i$ up to isometry and
permutation: their fibers through a fixed basepoint are uniquely
determined as subsets of $X$.
In particular, every isometry of $X$ acts by an isometry of $\R^m$
and, up to a permutation $\sigma$ of $I$, by isometries
$\phi_i:X_i\to X_{\sigma(i)}$. For an infinite pointed product, these
coordinate isometries must satisfy
\[
 \sum_{i\in I}d\bigl(\phi_i(o_i),o_{\sigma(i)}\bigr)^2<\infty.
\]
Conversely, any such data define an isometry of $X$.

Euclidean spaces and, more generally,
Hilbert spaces play a special role. Although $\R^n$ is a product of
$n$ irreducible factors, each isometric to $\R$, the factor fibers
through a fixed basepoint are not uniquely determined when $n\ge2$.
Recently, Foertsch, Lytchak, and Soultanis~\cite[Theorem~1.1]{FLS25}
proved that every complete metric space has a canonical decomposition
$X=H\times X'$, where $H$ is a maximal Hilbert space factor and $X'$
does not admit $\R$ as a direct factor; see also \cite{AGP06} for the case of Banach spaces. We show that Hilbert factors
are the only possible source of nonuniqueness of irreducible factor
fibers in complete metric spaces; see
Theorem~\ref{thm:canonical-decomposition}.

Note, however, that a complete metric space need not have any
irreducible factors.
Lenze~\cite[Theorem~B]{Len25} shows that
$L^2(\Omega,M)$ has no nontrivial irreducible factors when $\Omega$ is
a standard atomless probability space and $M$ is a complete Riemannian
manifold of dimension at least two with irreducible universal cover.
Taking $M$ compact gives bounded examples; taking $M$ to be the
hyperbolic plane gives CAT(0) examples.
% Moreover, taking $M$ to be the
%Euclidean cone over a positive-dimensional round sphere of radius greater
%than one (a space with a conical singularity), one obtains a CAT(0) space
%$X=L^2(\Omega,M)$ which is reducible, but every nontrivial direct factor
%of $X$ is isometric to $X$ itself. This follows by combining arguments
%from \cite[Section~5]{Len25} and Sections~3--5 of Lenze's paper on the Ebin metric.

\Needspace{18\baselineskip}
The central result of the paper, which is the main ingredient in the
results above, shows that in the absence of line factors any two product
decompositions have a common refinement, without any further
assumption:
\begin{mainthm}\label{thm:refinement}
Suppose that $X$ is a bounded metric space or a complete metric space that does not admit
$\R$ as a direct factor. Let
\[
X=A\times B=C\times D
\]
be two product decompositions, and identify the four factors with their
fibers through a common point $o$. Then
\begin{equation}\label{eq:main-refinement}
	X\cong(A\cap C)\times(A\cap D)\times(B\cap C)\times(B\cap D).
\end{equation}
This decomposition refines both given decompositions. In particular,
\[
 A=(A\cap C)\times(A\cap D).
\]
\end{mainthm}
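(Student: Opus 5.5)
A decomposition $X=A\times B$ with basepoint $o$ is encoded by its pair of coordinate projections $\pi_A\colon X\to A$ and $\pi_B\colon X\to B$. I will treat these as $1$-Lipschitz retractions satisfying
\[
d(x,y)^2=d(\pi_Ax,\pi_Ay)^2+d(\pi_Bx,\pi_By)^2 .
\]
The key objects are the compositions $P=\pi_A\circ\pi_C$ and $Q=\pi_C\circ\pi_A$, which play the role of products of orthogonal projections in a Hilbert space. The first step is to show that $\pi_A$ and $\pi_C$ commute, i.e.\ $\pi_A\pi_C=\pi_C\pi_A$. Granting commutation, define $A\cap C$ as the common image of $\pi_A\pi_C$, and the other three intersections similarly. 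The $\ell^2$-identity applied twice then gives
\[
d(x,y)^2=\sum d\bigl(\pi_A\pi_Cx,\pi_A\pi_Cy\bigr)^2 ,
\]
summed over the four combinations $\pi_A\pi_C,\pi_A\pi_D,\pi_B\pi_C,\pi_B\pi_D$. The map $x\mapsto(\pi_A\pi_Cx,\dots)$ is therefore an isometric embedding into the product in~\eqref{eq:main-refinement}. Surjectivity follows by assembling a point from the fibers. Since commuting coordinate projections preserve fibers, points of $A$ have the form $(a_1,a_2)$ with $a_1\in A\cap C$ and $a_2\in A\cap D$. Consequently $A=(A\cap C)\times(A\cap D)$, and likewise for the other three factors, which is the refinement claim.

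The main obstacle is proving commutation. My plan is to study the map $\pi_C|_A\colon A\to C$ together with $\pi_A|_C$. For $a\in A$, the splitting $X=C\times D$ gives
\[
d(o,a)^2=d(o,\pi_Ca)^2+d(o,\pi_Da)^2 .
\]
Using both product structures, I would show that every point decomposes into displacements in the four "directions", in the sense that the function $(x,y)\mapsto d(x,y)^2$ splits as a sum of pullbacks. The Pythagorean identities should make $x\mapsto \pi_A\pi_C x$ behave like a linear contraction. Iterating $(\pi_A\pi_C)^n$ gives a sequence analogous to the von Neumann alternating projection. I would show that it is Cauchy: the identity for squared distances should give monotone decreasing $d(o,(\pi_A\pi_C)^n x)$ and a telescoping sum of squared step lengths. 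Completeness then supplies a limit in $A\cap C$. Boundedness would be used in the same place, since bounded sequences with summable squared steps still converge where needed, or the limit can be taken in the completion. Call the limit map $R$. The remaining task is to show that $\pi_A\pi_C$ already equals $R$, i.e.\ that the "rotation part" between $A$ and $C$ is trivial.

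This is where the hypothesis enters. If $\pi_A\pi_C\neq R$, the defect between $A$ and $C$ behaves like a nontrivial "angle" in a Hilbert space. Two factors meeting at an angle should generate an isometric copy of a Euclidean plane, or at least a line, which splits off. Concretely, I would take a point $x$ moved by the defect and consider the orbit of $x$ under alternating reflections: in each product, the reflection swaps the sign of one coordinate in a suitable sense. This produces a bi-infinite geodesic, a line, and I would then show that it splits off via the $\ell^2$ structure and the Foertsch–Lytchak–Soultanis Hilbert factor theorem \cite{FLS25}. For complete $X$ without an $\R$ factor this is a contradiction. For bounded $X$, no line can exist at all, which is the same contradiction. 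Making this "defect yields a line factor" step rigorous is the crux. If it resists, I would first handle the case of finite sets of points to obtain the algebraic identity, and then pass to limits.
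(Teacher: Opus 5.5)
There is a genuine gap. Your reduction of the refinement to commutation of the factor projections is correct; it is essentially Lemma~\ref{lem:geometric-refinement}. The gap is the commutation step itself, which carries the whole content of the theorem, and the mechanism you sketch for it does not work.

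\emph{The limit map $R$ is never constructed.} Along the alternating sequence, Pythagoras does give $d(o,y_k)^2-d(o,y_{k+1})^2=d(y_k,y_{k+1})^2$. Hence $\sum_k d(y_k,y_{k+1})^2\le d(o,x)^2$. But square-summable steps do not make a sequence Cauchy, bounded or not: think of a point winding around a circle with steps $1/k$. Von Neumann's theorem uses linearity of the projections. The space $X$ has no linear structure in which $\pi_A\pi_C$ is a ``linear contraction''.

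\emph{The reflections do not exist.} An arbitrary factor $A$ has no isometry playing the role of $-1$, so your ``alternating reflections'' are undefined.

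\emph{A line would not split.} Even if you produced a bi-infinite geodesic, nothing forces it to split off in a general metric space. The result of \cite{FLS25} provides the maximal Hilbert factor; it does not certify that a given line is a splitting line.

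\emph{The bounded case.} Here there is no completeness hypothesis, so limits need not exist in $X$. Passing to the completion would be legitimate, but only after the complete case is proved.

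The missing idea is a linearization on which factor projections act linearly. Take the span $V$ of the functions $k_x(z)=\tfrac12\bigl(d(o,x)^2+d(o,z)^2-d(x,z)^2\bigr)$, with the bilinear form determined by $\beta(k_x,k_y)=k_x(y)$. Since $k_x\circ p=k_{p(x)}$, each factor projection $p$ induces a $\beta$-symmetric linear projection $P\cdot f=f\circ p$. Moreover, $pq=qp$ if and only if $PQ=QP$.

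One then needs a functional on $V$ that is additive under splittings.
\begin{itemize}
\item In the bounded case, the oscillation norm makes every $P$ an $L$-projection. $L$-projections commute by a short norm computation, so no line is ever built.
\item In the complete case, one uses $\Phi(f)=\sup_z\bigl(2f(z)-d(o,z)^2\bigr)$. Additivity of $\Phi$ together with $PQ\ne QP$ yields $h\ne0$ with $\Phi(f+th)=\Phi(f)+2t\beta(f,h)+t^2\beta(h,h)$.
\end{itemize}
A recognition criterion for the elements $k_z$, which is where completeness enters, then shows that each $k_x+th$ equals $k_{\tau_t(x)}$ for some point $\tau_t(x)\in X$. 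The maps $\tau_t$ form an explicit translation flow exhibiting $X\cong h^{-1}(0)\times\R$. This is what replaces your ``defect yields a line'' step.
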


Known proofs of de Rham-type decomposition theorems reduce geometric
questions to algebraic structures associated with the space. In the
Riemannian setting, the proofs in~\cite{dR52,EH98} use the holonomy
representation on a tangent space. The proof in~\cite{FL08} uses
the structure of maximal subsets isometric to convex sets in
normed spaces.

In the present paper, the proof also relies on some hidden algebra.
Somewhat surprisingly, even without any additional structure on the
metric space, product decompositions turn out to be related to a
sufficiently rich algebraic structure.
Our starting point relates uniqueness of product decompositions---more
precisely, the existence of common refinements as in
Lemma~\ref{lem:geometric-refinement}---to commutation of the associated
factor projections. We regard these projections as
$1$-Lipschitz self-maps of $X$, projecting $X$ onto a factor fiber through
a fixed basepoint $o\in X$. This reduces the geometric question of the
relative position of factors to commutation of such self-maps.

The relation between commuting projections and common refinements is
classical in Banach-space theory. In particular, for each fixed
$p\in[1,\infty]\setminus\{2\}$, all $L^p$-projections on a Banach space
commute~\cite{Beh76}; see also~\cite[p.~45]{HWW93}. The exponent
$p=2$, which is the only exponent considered for metric products in this
paper, is exceptional: orthogonal projections on a Hilbert space of
dimension at least two need not commute.

The next step in the algebraization program relates commutation in the
semigroup of self-maps to linear algebra. We describe an appropriate
vector subspace $V=V(X)\subset\R^X$ of the space of functions on $X$
that separates points of $X$ and is invariant under the
pullbacks $P\cdot f=f\circ p$ of all factor projections at $o$.
Once such a vector space is defined, commutation of the metric
projections $p,q:X\to X$ is equivalent to commutation of the induced
linear projections $P,Q:V\to V$.

The form of the product metric suggests a choice of $V$. We consider
the vector space generated by the centered squared-distance functions
\[
 k_x(z)=\frac12\bigl(d(o,x)^2+d(o,z)^2-d(x,z)^2\bigr).
\]
These functions satisfy $k_x\circ p=k_{p(x)}$ for every factor projection $p$ at $o$. Therefore,
their linear span $V$ is invariant under pullback by all such
projections and separates points of $X$.
This vector space, together with its canonical bilinear form $\beta$, appears
in several contexts, including Schoenberg's criterion for isometric
embeddability into Hilbert spaces~\cite{Sch38}.

The vector space $V$ and the bilinear form $\beta$ behave additively
with respect to product decompositions. It turns out that there are functionals  on $V$ that behave additively   with respect to products.
For bounded spaces $X$, one can consider the oscillation norm; for
unbounded $X$, we use a slightly more involved functional, known as a Fitzpatrick function in convex analysis. In both
cases, the interplay of this additional functional with noncommuting
projections provides enough additional structure to reach a contradiction
in the bounded case and to find a splitting line (in the original space
$X$!) in the unbounded complete case.

These ideas lead to the refinement theorem, Theorem~\ref{thm:refinement}.
The existence of a decomposition in Theorem~\ref{thm:proper} is again proved optimally using algebra.
The key step here is to recognize that, once commutation is established,
the space of factor projections at $o$ is a Boolean algebra in a natural
way. Such algebras have a surprisingly rich structure, and their
properties translate into properties of the underlying metric spaces.

The paper is organized as follows. Section~\ref{sec:commutation}
derives common refinement and uniqueness from commutation of factor
projections, and Section~\ref{sec:functions} constructs the linearization.
Sections~\ref{sec:bounded} and~\ref{sec:complete} prove commutation for
bounded spaces and for complete spaces without line factors, respectively.
Section~\ref{sec:projections} studies the resulting Boolean algebra,
constructs its atomic and atomless factors, and proves atomicity under
properness. Section~\ref{sec:decompositions} establishes the canonical
decomposition and proves the remaining theorems from the introduction.

\subsection*{Acknowledgements}
This paper arose in close collaboration with ChatGPT, an AI assistant
developed by OpenAI. The collaboration involved exploring proof strategies,
refining arguments, locating references, and drafting and revising the
manuscript. The author takes full responsibility for the mathematical
content and the final text.

 The author was supported in part by the German Research Foundation under Germany’s Excellence Strategy – EXC-2047/1 – 390685813.

\section{Commutation and uniqueness}
\label{sec:commutation}

Throughout the paper, all metric spaces are assumed to be nonempty, and
an empty product is a one-point space.

In this section, $X$ is an arbitrary metric space with a fixed point
$o$. 

\subsection{Factor projections}
If $X=A\times B$ and $o=(o_A,o_B)$, we identify $A$ with
$A\times\{o_B\}$ and $B$ with $\{o_A\}\times B$. The associated
factor projections are
\[
 p(a,b)=(a,o_B),\qquad \bar p(a,b)=(o_A,b).
\]
They are $1$-Lipschitz retractions, fix $o$, and satisfy
\begin{equation*}
 d(x,y)^2=d(p(x),p(y))^2+d(\bar p(x),\bar p(y))^2.
\end{equation*}

The map $p$ is the unique nearest-point projection onto its image:
for $a\in A$,
\[
 d(x,a)^2=d(p(x),a)^2+d(x,p(x))^2.
\]
In particular, a \emph{factor projection} is determined by its image.
Its complementary factor is
$\bar p(X)=p^{-1}(o)$, so the complementary projection is determined
by $p$ as well. We allow the identity and the constant map $x\mapsto o$
as factor projections.

\subsection{Commutation and refinement}
The following lemma gives the geometric consequence of commutation.

\begin{lemma}\label{lem:geometric-refinement}
Let $X=A\times B=C\times D$ be two decompositions of a metric space,
with factors identified with their fibers through $o$. Suppose that
the four corresponding factor projections $p,\bar p,q,\bar q$ onto
$A,B,C,D$, respectively, commute pairwise. Then the map $(q,\bar q)$
restricts to an isometry
\begin{equation}\label{eq:qbarq}
 (q,\bar q):A\longrightarrow(A\cap C)\times(A\cap D),
\end{equation}
and the map with components
$p\circ q,p\circ\bar q,\bar p\circ q,\bar p\circ\bar q$
defines a common refinement:
\begin{equation}\label{eq:ref}
 X=(A\cap C)\times(A\cap D)\times(B\cap C)\times(B\cap D).
\end{equation}
\end{lemma}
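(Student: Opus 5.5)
We will work entirely with the four commuting $1$-Lipschitz idempotents $p,\bar p,q,\bar q$ and the Pythagorean identities
\[
 d(x,y)^2=d(p x,p y)^2+d(\bar p x,\bar p y)^2=d(q x,q y)^2+d(\bar q x,\bar q y)^2 .
\]
The first step is to identify images of composites. Since $p,q$ commute and are idempotent, $pq$ is idempotent. Its image is exactly $A\cap C$: indeed $pq(x)=p(qx)\in A$ and $pq(x)=q(px)\in C$, and conversely every $z\in A\cap C$ satisfies $pz=z=qz$, so $pq z=z$. The same argument gives $\im(p\bar q)=A\cap D$, $\im(\bar p q)=B\cap C$ and $\im(\bar p\bar q)=B\cap D$.

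Next we prove the isometry \eqref{eq:qbarq}. For $a\in A$ we have $qa=q(pa)=p(qa)\in A\cap C$, and similarly $\bar q a\in A\cap D$, so $(q,\bar q)$ maps $A$ into the product. The Pythagorean identity for $(q,\bar q)$ shows that this map is distance-preserving. For surjectivity, take $(c,d)\in(A\cap C)\times(A\cap D)$. In $X=C\times D$ there is a unique $x$ with $qx=c$ and $\bar qx=d$. Put $a=px$. Then $qa=pqx=pc=c$ and $\bar qa=p\bar qx=pd=d$, and $a\in A$. This completes \eqref{eq:qbarq}.

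The identical argument applied to $B$, using $\bar p$ in place of $p$, gives an isometry $B\cong(B\cap C)\times(B\cap D)$. Composing with $X\cong A\times B$ via $(p,\bar p)$ yields \eqref{eq:ref}. It remains to check that the composite map is the one with components $pq,p\bar q,\bar pq,\bar p\bar q$. The map $X\to A\times B\to\cdots$ sends $x$ to $(q(px),\bar q(px),q(\bar px),\bar q(\bar px))$, and by commutation these entries equal $pqx$, $p\bar qx$, $\bar pqx$ and $\bar p\bar qx$.

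The refinement property is then immediate. Grouping the first two factors of \eqref{eq:ref} recovers $A$ by \eqref{eq:qbarq}. Grouping the first and third factors recovers $C$, by the symmetric argument with the roles of the two decompositions exchanged (for $c\in C$ one uses $p$ and $\bar p$).

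There is no serious obstacle. The one point that needs care is surjectivity together with the identification of fibers. We must make sure that the subsets $A\cap C$ and the others, viewed as fibers through $o$, carry the restricted metric, and that the isometry respects basepoints; the latter holds because all maps fix $o$. Surjectivity relies on the fact that $(q,\bar q):X\to C\times D$ is a bijection, which is exactly the hypothesis that $X=C\times D$ with fibers through $o$.
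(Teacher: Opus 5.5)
Your proof is correct and follows essentially the same route as the paper: commutation places $q(A)$ and $\bar q(A)$ in $A\cap C$ and $A\cap D$, the Pythagorean identity gives the isometric embedding, surjectivity comes from lifting a pair through $X=C\times D$ and applying $p$, and the refinement follows by treating $B$ the same way and then exchanging the roles of the two decompositions. The differences are cosmetic. You take $px$ itself as the preimage, whereas the paper shows that the lift already satisfies $x=p(x)$. You also check explicitly that the composite map has components $pq,\,p\bar q,\,\bar pq,\,\bar p\bar q$, which the paper leaves implicit.
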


\begin{proof}
For any point $a\in A$, we have
\[
 q(a)=q(p(a))=p(q(a))\in A\cap C,
\]
and similarly $\bar q(a)\in A\cap D$. Thus
$(q,\bar q):X\to C\times D$ restricts to an isometric embedding
\[
 A\longrightarrow(A\cap C)\times(A\cap D).
\]
Conversely, for any $a_C\in A\cap C$ and $a_D\in A\cap D$, the
decomposition $X=C\times D$ gives a point $a\in X$ with
$q(a)=a_C$ and $\bar q(a)=a_D$. Commutation gives
\[
 q(p(a))=p(q(a))=a_C,\qquad \bar q(p(a))=p(\bar q(a))=a_D.
\]
Thus $p(a)$ and $a$ have the same coordinates in $C\times D$, so
$a=p(a)\in A$. Hence the isometric embedding in \eqref{eq:qbarq}
is surjective.

This proves that $A$ decomposes as the direct product of $A\cap C$
and $A\cap D$. The same argument applies to $B$. Combining these
splittings with $X=A\times B$ gives \eqref{eq:ref}. Interchanging
the two original decompositions gives the analogous splittings of
$C$ and $D$, so this is a common refinement.
\end{proof}

\subsection{Uniqueness of decompositions}
\label{subsec:infinite-products}
For pointed metric spaces $(X_i,o_i)$, write
\[
 \prod_{i\in I}(X_i,o_i)
 =\left\{(x_i):\sum_{i\in I}d(x_i,o_i)^2<\infty\right\},
\]
with its usual $\ell^2$-metric and basepoint $(o_i)_{i\in I}$. The sum over
an arbitrary index set is the supremum of its finite subsums.

\begin{corollary}\label{cor:commuting-uniqueness}
Let $X$ be a metric space with basepoint $o$, and suppose that all
factor projections at $o$ commute. If, for an arbitrary index set $I$,
$X$ admits a decomposition
\[
 X\cong\prod_{i\in I}(X_i,o_i)
\]
into nontrivial irreducible metric spaces, with $o$ corresponding to
$(o_i)_{i\in I}$, then its factor fibers through $o$ are uniquely
determined up to permutation.
\end{corollary}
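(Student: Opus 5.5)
Suppose we have two decompositions $X\cong\prod_{i\in I}(X_i,o_i)\cong\prod_{j\in J}(Y_j,o'_j)$ into nontrivial irreducible factors, both with $o$ as basepoint. Write $A_i\subset X$ for the fiber of $X_i$ through $o$ and $C_j$ for the fiber of $Y_j$ through $o$. The goal is to show that every $A_i$ equals some $C_j$. By symmetry, every $C_j$ then equals some $A_i$. Distinct indices give distinct fibers: two fibers in the same decomposition meet only in $o$, and each is nontrivial. So this correspondence is a bijection $\sigma:I\to J$ with $A_i=C_{\sigma(i)}$, which is the claimed uniqueness up to permutation.

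\textbf{Step 1: each fiber is a factor.} For a fixed $i$, the pointed product splits as $X=A_i\times B_i$, where $B_i$ is the fiber through $o$ of $\prod_{k\ne i}(X_k,o_k)$. This is a plain two-factor $\ell^2$ decomposition, since the $\ell^2$-sum separates one coordinate from the others. Let $p_i$ be the corresponding factor projection at $o$. Similarly, for each $j$ we get $X=C_j\times D_j$ with projection $q_j$. By hypothesis, all these projections and their complements commute pairwise. So Lemma~\ref{lem:geometric-refinement} applies to the pair of decompositions $A_i\times B_i$ and $C_j\times D_j$. It gives
\[
A_i\cong(A_i\cap C_j)\times(A_i\cap D_j).
\]

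\textbf{Step 2: use irreducibility.} Since $A_i\cong X_i$ is irreducible, one of the two factors above is the single point $o$.
\begin{itemize}
\item If $A_i\cap D_j=\{o\}$, then $A_i\subset C_j$.
\item Otherwise $A_i\cap C_j=\{o\}$ and $A_i\subset D_j$, which by~\eqref{eq:qbarq} means $q_j|_{A_i}\equiv o$.
\end{itemize}
We now show that the first case happens for some $j$. Pick $x\in A_i$ with $x\ne o$. Its coordinates in the $Y$-decomposition are $(q_j(x))_j$, and these are not all equal to the basepoint, because $x\ne o$. So there is some $j$ with $q_j(x)\ne o$. For this $j$ the second case is impossible, and hence $A_i\subset C_j$.

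\textbf{Step 3: reverse the roles.} Applying Step 2 to $C_j$ against the $X$-decomposition gives some $k$ with $C_j\subset A_k$. Then $A_i\subset C_j\subset A_k$. Distinct fibers of one product meet only in $o$, and $A_i$ is nontrivial, so $k=i$. Therefore $A_i=C_j$.

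\textbf{Main obstacle.} The only point that needs care is Step 1 in the infinite case. Lemma~\ref{lem:geometric-refinement} is stated for two-factor decompositions, so we must check that $X=A_i\times B_i$ is a genuine two-factor product decomposition. We must also check that the complementary projection to $A_i$ is the projection onto the fiber of $\prod_{k\ne i}$. Both follow directly from the definition of the pointed $\ell^2$-product. The rest is formal.
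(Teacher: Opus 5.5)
Your proof is correct and follows the paper's argument essentially step by step. You use binary splittings with Lemma~\ref{lem:geometric-refinement} and irreducibility to get $A_i\subseteq C_j$ or $q_j|_{A_i}\equiv o$, pick a point $x\ne o$ to select $j$, and swap roles for the reverse inclusion; the only cosmetic difference is that you finish via $A_i\subseteq C_j\subseteq A_k\Rightarrow k=i$, whereas the paper observes that $p_i$ fixes the nontrivial set $A_i\subseteq C_j$ and is therefore the identity on $C_j$.
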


\begin{proof}
The one-point case is immediate. Compare two decompositions
$X=\prod_{i\in I}(X_i,o_i)=\prod_{j\in J}(Y_j,o'_j)$ into nontrivial
irreducible factors, and identify all factors with their fibers
through $o$. Let $p_i,q_j$ be the corresponding coordinate projections,
regarded as self-maps of $X$. Lemma~\ref{lem:geometric-refinement},
applied to each pair of binary splittings, gives
\[
 X_i=(X_i\cap Y_j)\times(X_i\cap Y_j^{\perp}),
\]
where $Y_j^{\perp}$ is the complementary factor fiber. Irreducibility
of $X_i$ implies that $q_j$ restricts to either the identity or the
constant map $o$ on $X_i$.

Fix $i\in I$ and choose $x\in X_i\setminus\{o\}$. Since the
$Y_j$-coordinates determine $x$, there is some $j\in J$ with
$q_j(x)\ne o$. Hence $q_j$ is the identity on $X_i$, so
$X_i\subseteq Y_j$. Since $p_i$ fixes the nontrivial subset
$X_i\subseteq Y_j$, the same argument applied to $Y_j$ shows that
$p_i$ is the identity on $Y_j$. Thus $Y_j\subseteq X_i$, and
$X_i=Y_j$.

This $j$ is unique, because distinct factor fibers of the second
decomposition intersect only at $o$. Reversing the roles of the two
decompositions shows that every $Y_j$ occurs. Thus there is a
bijection $\sigma:I\to J$ with $X_i=Y_{\sigma(i)}$ for every $i\in I$.
\end{proof}

\section{Linearization}
\label{sec:functions}

Let $X$ be a metric space with basepoint $o$, and write $r(x)=d(o,x)$.

\subsection{The function space}
Define the \emph{centered squared-distance kernel} by
\begin{equation}\label{eq:kernel}
 k(x,z)=k_x(z)=\frac12\bigl(r(x)^2+r(z)^2-d(x,z)^2\bigr),
 \qquad x,z\in X,
\end{equation}
and let
\[
 V=\spn_{\R}\{k_x:x\in X\}.
\]
All functions in $V$ vanish at $o$. The triangle inequality gives
\begin{equation}\label{eq:kernel-growth}
 |k_x(z)|\le r(x)r(z).
\end{equation}
Thus every $f\in V$ has at most linear growth. By definition, the
kernel is symmetric:
\[
 k_x(y)=k_y(x).
\]
The following construction plays a central role in
Schoenberg's theorem~\cite{Sch38}.
See also the quasilinearization of Berg and Nikolaev~\cite{BN08}
and the related duality constructions for Hadamard spaces
in~\cite{AKA10,CK17}.

\begin{lemma}\label{lem:pairing}
There is a well-defined symmetric bilinear form $\beta$
on $V$ such that
\begin{equation}\label{eq:pairing-evaluation}
 \beta(f,k_x)=f(x)\qquad(f\in V,\ x\in X).
\end{equation}
It is nondegenerate and satisfies
\begin{equation}\label{eq:pairing-distance}
 \beta(k_x,k_x)=r(x)^2,\qquad
 \beta(k_x-k_y,k_x-k_y)=d(x,y)^2.
\end{equation}
In particular, $x\mapsto k_x$ is injective, and $V$ separates the
points of $X$.
\end{lemma}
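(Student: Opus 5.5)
The plan is to define $\beta$ on generators and extend bilinearly, checking well-definedness through the reproducing property. For $f=\sum_i a_i k_{x_i}$ and $g=\sum_j b_j k_{y_j}$ in $V$, set
\[
 \beta(f,g)=\sum_{i,j}a_ib_j\,k(x_i,y_j).
\]
The apparent problem is that this depends on the chosen representations. But symmetry of the kernel gives
\[
 \sum_{i,j}a_ib_jk(x_i,y_j)=\sum_j b_j f(y_j)=\sum_i a_i g(x_i).
\]
The middle expression depends only on the function $f$ and the representation of $g$. The last depends only on the function $g$ and the representation of $f$. Hence the value depends only on $f$ and $g$ as functions, so $\beta$ is well defined. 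Bilinearity and symmetry are then immediate from the formula and from $k(x,y)=k(y,x)$. Taking $g=k_x$ in the middle expression gives $\beta(f,k_x)=f(x)$, which is \eqref{eq:pairing-evaluation}.

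Nondegeneracy also follows from the reproducing property. If $\beta(f,g)=0$ for all $g\in V$, then in particular $f(x)=\beta(f,k_x)=0$ for every $x\in X$, so $f=0$ as a function, i.e.\ as an element of $V\subset\R^X$.

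For \eqref{eq:pairing-distance}, first note $\beta(k_x,k_x)=k(x,x)=\tfrac12(r(x)^2+r(x)^2-0)=r(x)^2$. Expanding bilinearly,
\[
 \beta(k_x-k_y,k_x-k_y)=k(x,x)+k(y,y)-2k(x,y)=r(x)^2+r(y)^2-\bigl(r(x)^2+r(y)^2-d(x,y)^2\bigr)=d(x,y)^2.
\]
If $k_x=k_y$, the left side vanishes, so $d(x,y)=0$; hence $x\mapsto k_x$ is injective. For separation, given $x\ne y$ the function $f=k_x-k_y\in V$ satisfies
\[
 f(x)-f(y)=\beta(f,k_x-k_y)=d(x,y)^2\neq 0,
\]
so $f$ distinguishes $x$ and $y$.

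The only step with real content is well-definedness. The key point is that $V$ is defined as a space of functions, so two formal combinations with the same values must give the same pairing. The double-sum symmetry argument above handles this, and it is exactly where the symmetry $k_x(y)=k_y(x)$ is used. Note that $\beta$ need not be positive definite; only nondegeneracy is claimed.
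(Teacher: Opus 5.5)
Your proposal is correct and follows essentially the same route as the paper: you define $\beta$ on generators, and you get well-definedness from the symmetry $k_x(y)=k_y(x)$. The paper phrases this as ``every linear relation among the generators pairs to zero with every generator''; your two single-sum expressions for the double sum are the same argument. Nondegeneracy, the distance identities, injectivity and point separation are then derived exactly as in the paper.
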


\begin{proof}
Prescribe $\beta(k_x,k_y)=k_x(y)$ and extend by bilinearity.
To check that this defines a bilinear form on $V$, suppose that
$\sum_i a_i k_{x_i}=0$ in $V$. Then, for every $y\in X$,
\[
 \sum_i a_i k_{x_i}(y)=\sum_i a_i k_y(x_i)=0.
\]
Thus every linear relation among the generators pairs to zero with
every generator in either argument. The bilinear extension is
therefore well-defined, and it is symmetric because the kernel is
symmetric.

The distance identities \eqref{eq:pairing-distance} follow directly
from \eqref{eq:kernel}. Both sides of
\eqref{eq:pairing-evaluation} are linear in $f$ and agree for every
$f=k_y$, $y\in X$. Hence \eqref{eq:pairing-evaluation} holds for all
$f\in V$.

If $\beta(f,g)=0$ for every $g\in V$, then
\eqref{eq:pairing-evaluation} gives $f(x)=0$ for every $x\in X$.
Thus $\beta$ is nondegenerate. Injectivity of $x\mapsto k_x$ follows
from \eqref{eq:pairing-distance}.

For $x\ne y$, the function $f=k_x-k_y$ satisfies
\[
 f(x)-f(y)=\beta(k_x-k_y,k_x-k_y)=d(x,y)^2>0.
\]
Thus $V$ separates points of $X$.
\end{proof}

By Schoenberg's theorem, $\beta$ is positive definite if and only if
$X$ admits an isometric embedding into a Hilbert space.

\subsection{Geometric and algebraic splitting}
We denote the identity map on any vector space by $I$.

\begin{lemma}\label{lem:linear-product-decomposition}
Let $X=A\times B$ be a product decomposition with basepoint
$o=(o_A,o_B)$ and complementary factor projections $p,\bar p$.
Then the following hold.
\begin{enumerate}
\item For every $f\in V$, the functions $f_A:=f\circ p$ and
$f_B:=f\circ\bar p$ belong to $V$.
\item Every $f\in V$ satisfies $f=f_A+f_B$. In particular,
\[
 f(a,b)=f(a,o_B)+f(o_A,b)
 \qquad((a,b)\in A\times B).
\]
\item The map $P:V\to V$ defined by
\begin{equation*}
 P\cdot f=f\circ p
\end{equation*}
is a linear projection symmetric with respect to $\beta$.
Its complementary projection satisfies $(I-P)\cdot f=f\circ\bar p$.
\end{enumerate}
\end{lemma}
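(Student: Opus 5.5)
The whole lemma rests on the identity $k_x\circ p=k_{p(x)}$ for the generators. Everything else follows by linearity and from the evaluation identity \eqref{eq:pairing-evaluation}.

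\textbf{Step 1: the generator identity.} Write $x=(a,b)$ and $z=(a',b')$, so that $o=(o_A,o_B)$ and $p(z)=(a',o_B)$. Then
\[
 r(z)^2=d(a',o_A)^2+d(b',o_B)^2,\qquad d(x,z)^2=d(a,a')^2+d(b,b')^2 .
\]
Substituting into \eqref{eq:kernel} splits the kernel additively:
\[
 k_x(z)=k^A_a(a')+k^B_b(b'),
\]
where $k^A,k^B$ are the analogous kernels of $A$ and $B$ at $o_A,o_B$.

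Apply this to $k_x(p(z))$. Since $p(z)$ has second coordinate $o_B$, the $B$-term is $k^B_b(o_B)=0$, so $k_x(p(z))=k^A_a(a')$. The right-hand side $k_{p(x)}(z)$ gives the same value $k^A_a(a')$. Hence
\[
 k_x\circ p=k_{p(x)}\in V,\qquad\text{and likewise}\qquad k_x\circ\bar p=k_{\bar p(x)}.
\]
The same splitting also reads $k_x=k_{p(x)}+k_{\bar p(x)}$ as functions on $X$.

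\textbf{Step 2: parts (1) and (2).} Composition with $p$ is linear on functions. So (1) follows by taking linear combinations of the generator identity. Part (2) follows in the same way from $k_x=k_x\circ p+k_x\circ\bar p$. Evaluating $f=f_A+f_B$ at $(a,b)$ gives the displayed formula.

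\textbf{Step 3: part (3).}
\begin{itemize}
\item \emph{Well-definedness and linearity.} $P\cdot f=f\circ p$ is defined on functions, so it is automatically well-defined on $V$ and linear.
\item \emph{Projection.} Since $p\circ p=p$, we get $P^2=P$.
\item \emph{Complement.} By (2), $(I-P)\cdot f=f-f\circ p=f\circ\bar p$.
\item \emph{Symmetry.} By bilinearity it suffices to check generators. Using \eqref{eq:pairing-evaluation},
\[
 \beta(P k_x,k_y)=\beta(k_{p(x)},k_y)=k_y(p(x))=k_{p(y)}(x)=\beta(k_x,P k_y).
\]
The third equality is the generator identity from Step 1 applied with $y$ in place of $x$ (together with the symmetry of the kernel).
\end{itemize}

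I expect no real obstacle. The only point needing care is the coordinate bookkeeping in Step 1: one must check that the $B$-terms cancel exactly at $p(z)$ because $o$ is the basepoint used both for $r$ and for the fibers.
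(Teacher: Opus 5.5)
Your proof is correct and follows the paper's argument: derive $k_x\circ p=k_{p(x)}$, $k_x\circ\bar p=k_{\bar p(x)}$ and $k_x=k_{p(x)}+k_{\bar p(x)}$ from the product formula for squared distances, extend by linearity, and obtain $\beta$-symmetry from the evaluation identity. The differences are minor: the paper pairs an arbitrary $f$ with $k_x$ where you pair two generators, and your coordinate computation in Step~1 spells out what the paper simply calls the product formula.
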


\begin{proof}
The product formula for squared distances gives
\begin{equation*}
 k_x\circ p=k_{p(x)},\qquad
 k_x\circ\bar p=k_{\bar p(x)},\qquad
 k_x=k_{p(x)}+k_{\bar p(x)}.
\end{equation*}
Since the functions $k_x$ span $V$, the first two assertions follow
by linearity.

The map $P$ is linear, and $P^2=P$ follows from $p\circ p=p$.
The formula for $I-P$ follows from the second assertion.
Finally, for every $f\in V$ and $x\in X$, the evaluation identity
\eqref{eq:pairing-evaluation} gives
\begin{align*}
 \beta(P\cdot f,k_x)
 &=(P\cdot f)(x)=f(p(x))\\
 &=\beta(f,k_{p(x)})=\beta(f,P\cdot k_x).
\end{align*}
Since the functions $k_x$ span $V$, this proves
\begin{equation*}
 \beta(P\cdot f,g)=\beta(f,P\cdot g)\qquad(f,g\in V).\qedhere
\end{equation*}
\end{proof}

\subsection{Commutation of geometric and algebraic projections}
\label{subsec:geometric-algebraic-commutation}
Let $p,q$ be factor projections at $o$ associated with product
decompositions $X=A\times B=C\times D$, and let $P,Q$ be the induced
linear projections. For every $x\in X$,
\[
 PQ\cdot k_x=k_{p(q(x))},\qquad
 QP\cdot k_x=k_{q(p(x))}.
\]
Since the $k_x$ span $V$ and $x\mapsto k_x$ is injective,
\[
 p\circ q=q\circ p\quad\Longleftrightarrow\quad PQ=QP.
\]

\section{Commutation in bounded spaces}
\label{sec:bounded}

The proof in this section uses only boundedness. Its linear-algebraic
ingredient is the standard commutation theorem for $L$-projections,
that is, linear projections on normed spaces satisfying the identities
in Lemma~\ref{lem:L-projections} below;
see~\cite{Cun60} and~\cite[Theorem~I.1.10(a)]{HWW93}. We recall the
elementary proof, which does not use completeness.

\begin{lemma}\label{lem:L-projections}
Let $P,Q$ be linear projections on a normed space $E$ such that
\[
 \|v\|=\|P\cdot v\|+\|(I-P)\cdot v\|
      =\|Q\cdot v\|+\|(I-Q)\cdot v\|\qquad(v\in E).
\]
Then $PQ=QP$.
\end{lemma}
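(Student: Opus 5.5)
We prove the classical Cunningham argument: first show that $PQP=QP$ (that is, $Q$ maps $\im P$ into itself), then use symmetry of roles to conclude. Write $\bar P=I-P$ and $\bar Q=I-Q$. The key observation is that for an $L$-projection $P$, a vector $v$ lies in $\im P$ exactly when $\|\bar P v\|=0$, and $L$-projections are contractive: $\|Pv\|\le\|v\|$.

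The main step is to show $\bar P Q P=0$. Take $v\in\im P$, so $v=Pv$. Decompose $v=Qv+\bar Qv$. Applying the $P$-identity to $Qv$ and to $\bar Qv$ and the triangle inequality gives
\[
\|v\|=\|Qv\|+\|\bar Qv\| = \|PQv\|+\|\bar PQv\|+\|P\bar Qv\|+\|\bar P\bar Qv\|.
\]
On the other hand $v=Pv=PQv+P\bar Qv$, so $\|v\|\le\|PQv\|+\|P\bar Qv\|$. Comparing, $\|\bar PQv\|+\|\bar P\bar Qv\|\le 0$. Hence $\bar PQv=0$, that is, $QPv\in\im P$ for every $v$. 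Equivalently, $PQP=QP$ as operators on $E$.

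Now apply the same argument with $P$ replaced by $\bar P$, which is also an $L$-projection with the same identity. This gives $\bar PQ\bar P=Q\bar P$. Expanding $\bar P=I-P$ yields
\[
(I-P)Q(I-P)=Q(I-P),\quad\text{i.e.}\quad Q-PQ-QP+PQP=Q-QP,
\]
so $PQ=PQP$. Combined with $PQP=QP$, this gives $PQ=QP$.

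The only delicate point is the inequality chain in the main step, which is where the additivity of the norm is used in full. Everything else is formal algebra, and no completeness of $E$ is required.
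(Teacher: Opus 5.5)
Your proof is correct and is essentially the paper's argument. For $v\in\im P$ you expand $\|v\|$ through the $Q$- and then the $P$-identity, compare with the triangle inequality for $v=PQ\cdot v+P(I-Q)\cdot v$, and then repeat with $I-P$ in place of $P$. The differences are cosmetic: you keep $\|\bar PQv\|$ and $\|\bar P\bar Qv\|$ separate instead of merging them via $\bar P\bar Qv=-\bar PQv$, as the paper does to get its factor $2$. You also write out the final operator algebra $PQP=QP=PQ$, which the paper phrases as $Q$ preserving $\im P$ and $\ker P$.
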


\begin{proof}
For $v\in\im P$, we have
$(I-P)(I-Q)\cdot v=-(I-P)Q\cdot v$. Applying the norm identities gives
\begin{align*}
 \|v\|
 &=\|PQ\cdot v\|+\|(I-P)Q\cdot v\|
   +\|P(I-Q)\cdot v\|+\|(I-P)(I-Q)\cdot v\|\\
 &=\|PQ\cdot v\|+\|P(I-Q)\cdot v\|+2\|(I-P)Q\cdot v\|\\
 &\ge\|(PQ+P(I-Q))\cdot v\|+2\|(I-P)Q\cdot v\|\\
 &=\|v\|+2\|(I-P)Q\cdot v\|.
\end{align*}
Hence $(I-P)Q\cdot v=0$, so $Q\cdot v$ belongs to
$\ker(I-P)=\im P$. Thus $Q$ preserves $\im P$.
Replacing $P$ by $I-P$ shows that $Q$ also preserves $\ker P$,
and therefore $PQ=QP$.
\end{proof}

\begin{proposition}\label{prop:bounded-commutation}
For every bounded metric space $X$ and $o\in X$, the factor projections
at $o$ commute.
\end{proposition}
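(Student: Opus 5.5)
The plan is to equip the linearization $V$ from Section~\ref{sec:functions} with a norm for which every induced projection $P$ is an $L$-projection. Then Lemma~\ref{lem:L-projections} gives $PQ=QP$, and the equivalence in Subsection~\ref{subsec:geometric-algebraic-commutation} turns this into $p\circ q=q\circ p$. The natural candidate is the oscillation norm
\[
 \|f\|=\sup_{x\in X}f(x)-\inf_{x\in X}f(x),\qquad f\in V .
\]

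First I check that this is a norm on $V$. Since $X$ is bounded, \eqref{eq:kernel-growth} shows that every $k_x$ is bounded, so every $f\in V$ is bounded and $\|f\|<\infty$. The oscillation is a seminorm. If $\|f\|=0$, then $f$ is constant, and since every function in $V$ vanishes at $o$, we get $f=0$.

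The key step is the additivity identity. Let $p$ be a factor projection at $o$ for a splitting $X=A\times B$, and let $P$ be the induced projection. By Lemma~\ref{lem:linear-product-decomposition}(2),
\[
 f(a,b)=f(a,o_B)+f(o_A,b),
\]
which is a sum of a function of $a$ alone and a function of $b$ alone. The variables $a\in A$ and $b\in B$ range independently over $X=A\times B$. Therefore
\[
 \sup_X f=\sup_X (f\circ p)+\sup_X (f\circ\bar p),
\]
and likewise for the infimum. Subtracting and using Lemma~\ref{lem:linear-product-decomposition}(3), which identifies $f\circ\bar p$ with $(I-P)\cdot f$, we obtain
\[
 \|f\|=\|P\cdot f\|+\|(I-P)\cdot f\| .
\]

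The same identity holds for $Q$, coming from any second decomposition $X=C\times D$. Lemma~\ref{lem:L-projections} then yields $PQ=QP$, and hence $p\circ q=q\circ p$.

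I do not expect a real obstacle. The only points needing care are finiteness and definiteness of the oscillation norm, which is where boundedness of $X$ and the normalization $f(o)=0$ enter. The degenerate cases where $p$ is the identity or the constant map $o$ are covered by the same argument, since the corresponding $P$ is $I$ or $0$.
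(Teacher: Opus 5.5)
Your proposal is correct and follows the paper's proof essentially step for step. The oscillation norm on $V$ is finite because $X$ is bounded and definite because every $f\in V$ satisfies $f(o)=0$; the $\sup$ and $\inf$ split additively by Lemma~\ref{lem:linear-product-decomposition}(2), so every induced $P$ is an $L$-projection; and Lemma~\ref{lem:L-projections} together with Subsection~\ref{subsec:geometric-algebraic-commutation} then gives commutation of the factor projections.
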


\begin{proof}
All functions in $V$ are bounded, and the oscillation seminorm
\begin{equation*}
 \|f\|_{\mathrm{osc}}=\sup_X f-\inf_X f
\end{equation*}
is a norm on $V$, since every $f\in V$ vanishes at $o$.

For a splitting $X=A\times B$, let $P,\bar P=I-P$ be the induced
linear projections. The identity $f=P\cdot f+\bar P\cdot f$ and
Lemma~\ref{lem:linear-product-decomposition}(2) imply
\begin{align*}
 \sup_X f&=\sup_X(P\cdot f)+\sup_X((I-P)\cdot f),\\
 \inf_X f&=\inf_X(P\cdot f)+\inf_X((I-P)\cdot f).
\end{align*}
Subtracting these identities yields
\begin{equation*}
 \|f\|_{\mathrm{osc}}
 =\|P\cdot f\|_{\mathrm{osc}}+\|(I-P)\cdot f\|_{\mathrm{osc}}.
\end{equation*}
Thus every induced projection $P$ is an $L$-projection on the same normed
space $(V,\|\cdot\|_{\mathrm{osc}})$. By Lemma~\ref{lem:L-projections},
all these linear projections commute. The corresponding factor
projections on $X$ therefore commute as well, as explained in
Subsection~\ref{subsec:geometric-algebraic-commutation}.
\end{proof}

Combining Proposition~\ref{prop:bounded-commutation}
and Lemma~\ref{lem:geometric-refinement}, we deduce:
\begin{corollary}\label{cor:bounded}
	Any two product decompositions of a bounded metric space have the
	common refinement \eqref{eq:main-refinement}.
\end{corollary}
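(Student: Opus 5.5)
The statement is Corollary~\ref{cor:bounded}, and the plan is to combine the two preceding results directly.

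Let $X$ be bounded and fix two decompositions $X=A\times B=C\times D$. Let $o$ be a common point, and identify all four factors with their fibers through $o$. Let $p,\bar p,q,\bar q$ denote the factor projections at $o$ onto $A,B,C,D$. Each of these is a factor projection at $o$ in the sense of Section~\ref{sec:commutation}: $\bar p$ is the factor projection associated with the swapped decomposition $X=B\times A$, and similarly $\bar q$ comes from $X=D\times C$.

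By Proposition~\ref{prop:bounded-commutation}, all factor projections at $o$ on the bounded space $X$ commute. In particular, the four maps $p,\bar p,q,\bar q$ commute pairwise. This is exactly the hypothesis of Lemma~\ref{lem:geometric-refinement}.

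That lemma then yields two conclusions. First, the map $(q,\bar q)$ restricts to an isometry $A\cong(A\cap C)\times(A\cap D)$, and analogously for $B$, $C$ and $D$. Second, the map with components $p\circ q$, $p\circ\bar q$, $\bar p\circ q$ and $\bar p\circ\bar q$ gives the four-fold splitting \eqref{eq:main-refinement}, which refines both decompositions.

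The only step that needs care is confirming that $\bar p$ and $\bar q$ fall under Proposition~\ref{prop:bounded-commutation}. Should one prefer not to rely on that, there is a purely linear argument. If $PQ=QP$, then $I-P$ commutes with $Q$ and with $I-Q$, by linearity. Subsection~\ref{subsec:geometric-algebraic-commutation} then transfers these identities back to the maps on $X$. Nothing else is required, so I expect no genuine obstacle.
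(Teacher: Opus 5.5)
Your proposal is correct and matches the paper's own argument: the corollary is deduced by applying Proposition~\ref{prop:bounded-commutation} to get pairwise commutation of $p,\bar p,q,\bar q$ and then invoking Lemma~\ref{lem:geometric-refinement}. Your observation that $\bar p$ and $\bar q$ are themselves factor projections at $o$, coming from the swapped splittings $X=B\times A$ and $X=D\times C$, is exactly the small check needed for the proposition to cover all four maps.
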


\section{Commutation in complete spaces}
\label{sec:complete}

We prove Theorem~\ref{thm:refinement} by showing that noncommuting
factor projections produce a splitting line. Let $X$ be an arbitrary
metric space with basepoint $o$; completeness will be assumed only
in Proposition \ref{prop:kernel-recognition} and Lemma \ref{lem:realization}. We retain $V$, $\beta$, and
$r(x)=d(o,x)$ from Section~\ref{sec:functions}. The one-point case is
immediate, so we assume that $X$ has at least two points.

\subsection{A convex functional}
\label{subsec:functional}

In the absence of boundedness, the oscillation norm need not be finite.
We replace it by a convex functional whose additivity under product
splittings survives. By \eqref{eq:kernel-growth}, every $f\in V$ has
at most linear growth, so
\begin{equation}\label{eq:growth-norm}
 \|f\|_*:=\sup_{z\ne o}\frac{|f(z)|}{r(z)}
\end{equation}
is a finite norm on $V$. This auxiliary norm records the optimal growth
rate of $f$ relative to $o$ over all scales. We define the functional
$\Phi$ on $V$ by
\begin{equation}\label{eq:Phi}
 \Phi(f)=\sup_{z\in X}\bigl(2f(z)-r(z)^2\bigr).
\end{equation}

Functionals of this type appear in convex analysis as \emph{Fitzpatrick
functions}; see \cite{Fit88} and \cite[Sections~19 and~23]{Sim08}.
The functional $\Phi$ can also be interpreted via the Hopf--Lax
construction; see \cite[Section~3]{AGS14}.

\begin{lemma}\label{lem:Phi-basic}
The functional $\Phi$ is finite, convex, and locally Lipschitz with
respect to the norm \eqref{eq:growth-norm}. Moreover,
\begin{equation}\label{eq:Phi-bound}
 0\le\Phi(f)\le\|f\|_*^2\qquad(f\in V).
\end{equation}
For every $x\in X$,
\begin{equation}\label{eq:Phi-kernel}
 \Phi(k_x)=r(x)^2,
\end{equation}
and the supremum defining $\Phi(k_x)$ is attained uniquely at $x$.
\end{lemma}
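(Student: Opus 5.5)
The plan is to derive every assertion from the pointwise estimate $f(z)\le\|f\|_*\,r(z)$, which is immediate from \eqref{eq:growth-norm}, together with convexity of $t\mapsto 2t$ under suprema. Each claim reduces to a one-variable computation.

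\emph{Bounds \eqref{eq:Phi-bound}.} For the lower bound I take $z=o$: since $f(o)=0$ and $r(o)=0$, this gives $\Phi(f)\ge 0$. For the upper bound, every $z$ satisfies
\[
 2f(z)-r(z)^2\le 2\|f\|_*r(z)-r(z)^2\le\|f\|_*^2,
\]
by maximizing the quadratic $2at-t^2$ over $t\ge 0$. In particular $\Phi$ is finite.

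\emph{Convexity.} For each fixed $z$, the map $f\mapsto 2f(z)-r(z)^2$ is affine in $f$. A pointwise supremum of affine functionals is convex, and it is finite here by the previous step.

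\emph{Local Lipschitz continuity.} Rather than invoke the general fact that a convex function that is locally bounded on a normed space is locally Lipschitz, I will give a direct estimate. Suppose $\|f\|_*,\|g\|_*\le R$. First I restrict the supremum for $f$ to points with $r(z)\le 2R$: for any $z$ with $r(z)>2R$,
\[
 2f(z)-r(z)^2\le r(z)\bigl(2R-r(z)\bigr)<0\le\Phi(f),
\]
so such points do not affect the supremum. On the remaining points,
\[
 2f(z)-r(z)^2\le 2g(z)-r(z)^2+2\|f-g\|_*r(z)\le\Phi(g)+4R\|f-g\|_*.
\]
By symmetry this yields $|\Phi(f)-\Phi(g)|\le 4R\|f-g\|_*$.

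\emph{The kernel case \eqref{eq:Phi-kernel}.} By \eqref{eq:kernel},
\[
 2k_x(z)-r(z)^2=r(x)^2-d(x,z)^2\le r(x)^2,
\]
with equality if and only if $d(x,z)=0$, that is, $z=x$. So the supremum equals $r(x)^2$ and is attained exactly at $z=x$.

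I expect no real obstacle. The only mildly delicate step is the local Lipschitz estimate, where one must cut off the supremum to a bounded region; the quadratic decay of $-r^2$ handles this.
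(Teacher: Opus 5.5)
Your proposal is correct and follows essentially the same route as the paper: $z=o$ together with the quadratic bound $2\|f\|_*r-r^2\le\|f\|_*^2$ for \eqref{eq:Phi-bound}, convexity as a supremum of affine maps, local Lipschitz continuity via cutting the supremum off at $r(z)\le 2R$ to get the constant $4R$, and the identity $2k_x(z)-r(z)^2=r(x)^2-d(x,z)^2$ for the kernel case. The only cosmetic difference is that you apply the cutoff only to $f$ and then use symmetry, whereas the paper restricts both suprema at once; the argument is otherwise the same.
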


\begin{proof}
Taking $z=o$ gives $\Phi(f)\geq 0$. On the other hand, for every $z\in X$,
\[
 2f(z)-r(z)^2\le2\|f\|_*r(z)-r(z)^2\le\|f\|_*^2
\]
by \eqref{eq:growth-norm}. Thus $\Phi$ is finite and
\eqref{eq:Phi-bound} holds.

For each fixed $z\in X$, the map $f\mapsto 2f(z)-r(z)^2$ is affine
on $V$. Thus $\Phi$ is a supremum of affine functions and hence convex.

If $\|f\|_*,\|g\|_*\le M$, the suprema defining $\Phi(f)$ and
$\Phi(g)$ may be restricted to $r(z)\le2M$: outside this ball the
expressions being maximized are nonpositive, whereas their values
at $o$ are zero. Consequently,
\[
 |\Phi(f)-\Phi(g)|
 \le\sup_{r(z)\le2M}2|f(z)-g(z)|
 \le4M\|f-g\|_*.
\]
Finally, the kernel definition gives
\begin{equation} \label{eq:kernel-supremum-identity}
 2k_x(z)-r(z)^2=r(x)^2-d(x,z)^2,
\end{equation}
which proves \eqref{eq:Phi-kernel} and uniqueness of the maximizer.
\end{proof}

Additivity under products replaces the $L$-projection property used
in the bounded case.

\begin{lemma}\label{lem:additivity}
For every factor projection $p$ and its induced linear projection $P$,
\begin{equation}\label{eq:Phi-splitting}
 \Phi(f)=\Phi(P\cdot f)+\Phi((I-P)\cdot f)\qquad(f\in V).
\end{equation}
\end{lemma}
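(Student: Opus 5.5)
The plan is to split the supremum defining $\Phi$ along the product structure, much as in the bounded case in the proof of Proposition~\ref{prop:bounded-commutation}. Write $X=A\times B$ with $o=(o_A,o_B)$, and let $p,\bar p$ be the factor projections. For $f\in V$, Lemma~\ref{lem:linear-product-decomposition}(2) gives
\[
 f(z)=f(p(z))+f(\bar p(z))=(P\cdot f)(z)+((I-P)\cdot f)(z).
\]
Since $o$ corresponds to $(o_A,o_B)$, the product metric yields
\[
 r(z)^2=d(o,p(z))^2+d(o,\bar p(z))^2=r(p(z))^2+r(\bar p(z))^2.
\]
Hence, for every $z\in X$,
\[
 2f(z)-r(z)^2=\bigl(2f(p(z))-r(p(z))^2\bigr)+\bigl(2f(\bar p(z))-r(\bar p(z))^2\bigr).
\]

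Next, I would identify the two suprema. The map $z\mapsto(p(z),\bar p(z))$ is a bijection from $X$ onto $A\times B$, where $A=p(X)$ and $B=\bar p(X)$ are the fibers through $o$. Taking the supremum over $z$ of a sum of a function of $p(z)$ and a function of $\bar p(z)$, with the two coordinates ranging independently, therefore gives
\[
 \Phi(f)=\sup_{a\in A}\bigl(2f(a)-r(a)^2\bigr)+\sup_{b\in B}\bigl(2f(b)-r(b)^2\bigr).
\]
Both suprema are finite and nonnegative by the same estimate as in Lemma~\ref{lem:Phi-basic}.

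It remains to match each term with $\Phi(P\cdot f)$ and $\Phi((I-P)\cdot f)$. Here $\Phi(P\cdot f)=\sup_{z\in X}\bigl(2f(p(z))-r(z)^2\bigr)$. Since $r(z)\ge r(p(z))$, each term is at most $2f(p(z))-r(p(z))^2$, so $\Phi(P\cdot f)$ is bounded above by the supremum over $A$. Conversely, restricting to $z\in A$, where $p(z)=z$, shows that $\Phi(P\cdot f)$ is at least that supremum. So the two agree, and the same argument with $\bar p$ handles the $B$-term.

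The argument is routine. The only point needing care is the independence of the coordinates, i.e.\ the surjectivity of $(p,\bar p)$ onto $A\times B$, which is exactly what lets the supremum of the sum split as a sum of suprema, together with the fact that $r$ splits because the basepoint is the product basepoint.
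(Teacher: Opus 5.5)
Your proof is correct and is essentially the paper's argument: you split $f$ and $r^2$ additively along $X=A\times B$ and use the independence of the two coordinates to turn the supremum of the sum into a sum of suprema. You also make explicit why the factor suprema over $A$ and $B$ equal $\Phi(P\cdot f)$ and $\Phi((I-P)\cdot f)$, via $r(z)\ge r(p(z))$ and restriction to $z\in A$; the paper leaves this step implicit.
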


\begin{proof}
Identify $X=A\times B$ and $o=(o_A,o_B)$. By
Lemma~\ref{lem:linear-product-decomposition},
\[
 f(a,b)=f(a,o_B)+f(o_A,b),\qquad
 r(a,b)^2=r(a,o_B)^2+r(o_A,b)^2.
\]
The coordinates vary independently, so the supremum in \eqref{eq:Phi}
is the sum of the two factor suprema $\Phi(P\cdot f)$ and
$\Phi((I-P)\cdot f)$.
\end{proof}

The additivity of $\Phi$ implies a rectangle identity:
\begin{corollary}
With the notation above, for all $f,g\in V$ we have
\begin{equation}\label{eq:quadrangle}
 \Phi(f+g)+\Phi(f)
 =\Phi(f+P\cdot g)+\Phi(f+(I-P)\cdot g).
\end{equation}
\end{corollary}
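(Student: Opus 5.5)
We apply Lemma~\ref{lem:additivity} twice, once to the function $f+g$ and once to $f+P\cdot g$, and then compare the resulting decompositions. The only inputs are linearity of $P$ and the projection identities $P^2=P$ and $P(I-P)=0$.

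First, applying \eqref{eq:Phi-splitting} to $f+g$ gives
\[
 \Phi(f+g)=\Phi\bigl(P\cdot f+P\cdot g\bigr)+\Phi\bigl((I-P)\cdot f+(I-P)\cdot g\bigr).
\]
Applying it to $f$ gives $\Phi(f)=\Phi(P\cdot f)+\Phi((I-P)\cdot f)$. Adding these, the left-hand side of \eqref{eq:quadrangle} becomes a sum of four terms:
\[
 \Phi(P f+P g)+\Phi((I-P) f+(I-P) g)+\Phi(P f)+\Phi((I-P) f).
\]

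Next we treat the right-hand side the same way. For $h=f+P\cdot g$ we have $P\cdot h=P\cdot f+P\cdot g$ and $(I-P)\cdot h=(I-P)\cdot f$, so
\[
 \Phi(f+P\cdot g)=\Phi(P f+P g)+\Phi((I-P) f).
\]
Similarly, for $h'=f+(I-P)\cdot g$ we have $P\cdot h'=P\cdot f$ and $(I-P)\cdot h'=(I-P)\cdot f+(I-P)\cdot g$, so
\[
 \Phi(f+(I-P)\cdot g)=\Phi(P f)+\Phi((I-P) f+(I-P) g).
\]
The sum of these two is exactly the same four terms as before, which proves \eqref{eq:quadrangle}.

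No step here is a real obstacle. The only point that needs care is computing $P$ and $I-P$ on $f+P\cdot g$ and $f+(I-P)\cdot g$ correctly, which uses $P^2=P$ and $P(I-P)=(I-P)P=0$ from Lemma~\ref{lem:linear-product-decomposition}(3). Since all quantities are finite by Lemma~\ref{lem:Phi-basic}, the additions and rearrangements are legitimate.
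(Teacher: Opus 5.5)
Your proof is correct and is essentially the paper's argument: both sides of the rectangle identity expand via Lemma~\ref{lem:additivity}, using $P^2=P$ and $P(I-P)=(I-P)P=0$, into the same four terms $\Phi(P\cdot(f+g))+\Phi((I-P)\cdot f)+\Phi(P\cdot f)+\Phi((I-P)\cdot(f+g))$.
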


\begin{proof}
By \eqref{eq:Phi-splitting}, both sides of \eqref{eq:quadrangle} equal
\[
 \Phi(P\cdot(f+g))+\Phi((I-P)\cdot f)
 +\Phi(P\cdot f)+\Phi((I-P)\cdot(f+g)).\qedhere
\]
\end{proof}

We next establish directional differentiability of $\Phi$ at the functions $k_x$.

\begin{lemma}\label{lem:quadratic-estimate}
For every $x\in X$ and $g\in V$,
\begin{equation}\label{eq:derivative-at-kernel}
 \lim_{t\to0}\frac{\Phi(k_x+t g)-\Phi(k_x)}{t}=2g(x).
\end{equation}
\end{lemma}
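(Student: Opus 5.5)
The plan is to squeeze the difference quotient between a lower bound, obtained by testing the supremum at the single point $z=x$, and an upper bound, obtained by localizing the supremum near $x$ through the strict concavity identity \eqref{eq:kernel-supremum-identity}.

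\emph{Lower bound.} Evaluating the supremum in \eqref{eq:Phi} at $z=x$ and using \eqref{eq:Phi-kernel} gives
\[
 \Phi(k_x+tg)\ge 2k_x(x)+2tg(x)-r(x)^2=r(x)^2+2tg(x)=\Phi(k_x)+2tg(x)
\]
for every real $t$. Dividing by $t$ shows that the quotient is at least $2g(x)$ for $t>0$ and at most $2g(x)$ for $t<0$.

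\emph{Upper bound.} By \eqref{eq:kernel-supremum-identity}, for every $z$,
\[
 2(k_x+tg)(z)-r(z)^2=r(x)^2-d(x,z)^2+2tg(z),
\]
so that
\[
 \Phi(k_x+tg)-\Phi(k_x)=\sup_z\bigl(2tg(z)-d(x,z)^2\bigr).
\]
The function $g$ is Lipschitz-type near $x$. Writing $g=\sum_i a_ik_{x_i}$ and expanding the squared distances, one gets
\[
 |g(z)-g(x)|\le C\,d(x,z)\bigl(1+d(x,z)\bigr),
\]
with $C$ depending on $g$ and $x$. Indeed, for each generator,
\[
 |k_y(z)-k_y(x)|=\tfrac12\bigl|r(z)^2-r(x)^2-d(y,z)^2+d(y,x)^2\bigr|,
\]
and each difference of squares factors as a sum times a difference, where the difference is bounded by $d(x,z)$ by the triangle inequality. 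Hence
\[
 2tg(z)-d(x,z)^2\le 2tg(x)+2|t|C\delta(1+\delta)-\delta^2,
 \qquad \delta=d(x,z).
\]
For $|t|$ small, the extra term satisfies
\[
 2|t|C\delta+2|t|C\delta^2-\delta^2\le 2|t|C\delta-\tfrac12\delta^2\le 2C^2t^2.
\]
Therefore
\[
 \Phi(k_x+tg)-\Phi(k_x)\le 2tg(x)+O(t^2).
\]
Combining this with the lower bound gives \eqref{eq:derivative-at-kernel} from both sides.

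The main obstacle is the local Lipschitz estimate for $g$ near $x$, with the quadratic growth term absorbed. The quadratic term $2|t|C\delta^2$ is dominated by $\delta^2/2$ once $|t|\le 1/(4C)$, so this step only needs the explicit finite expansion of $g$ in the generators; it does not use \eqref{eq:kernel-growth} at $o$ alone.
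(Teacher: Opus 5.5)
Your proof is correct and is essentially the paper's argument. The lower bound comes from testing $z=x$, the difference reduces to $\sup_z\bigl(2tg(z)-d(x,z)^2\bigr)$, you use the local estimate $|k_y(z)-k_y(x)|\le A\rho+\rho^2$, and you complete the square for small $|t|$. The one difference is minor: you sum the generator estimates directly to get $|g(z)-g(x)|\le C\rho(1+\rho)$, while the paper bounds $\Psi_x(tk_y)$ for generators and passes to general $g$ via convexity of $\Psi_x$; both work, as long as you take $C>0$ so that $|t|\le 1/(4C)$ makes sense.
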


\begin{proof}
Fix $x\in X$ and consider the convex function
\begin{equation*}
 \Psi_x(g):=\Phi(k_x+g)-\Phi(k_x)-2g(x).
\end{equation*}
By \eqref{eq:Phi} and \eqref{eq:kernel-supremum-identity},
\begin{equation*}
 \Psi_x(tg)
 =\sup_{z\in X}\bigl(2t(g(z)-g(x))-d(x,z)^2\bigr)\ge0,
\end{equation*}
where the inequality follows by taking $z=x$. We will show that
$\Psi_x(tg)=O(t^2)$, first for the generators $k_y$ and then, by
convexity, for every $g\in V$.

Fix $y\in X$ and set $A=r(x)+d(x,y)$. For $z\in X$, put
$\rho=d(x,z)$. The kernel formula and the triangle inequality give
\begin{equation*}
 \begin{aligned}
  |k_y(z)-k_y(x)|
  &\le\tfrac12\bigl|r(z)^2-r(x)^2\bigr|
     +\tfrac12\bigl|d(y,z)^2-d(y,x)^2\bigr|\\
  &\le\tfrac{\rho}{2}\bigl(2r(x)+\rho+2d(x,y)+\rho\bigr)
   =A\rho+\rho^2.
 \end{aligned}
\end{equation*}
For $|t|\le1/4$, it follows that
\[
 2t\bigl(k_y(z)-k_y(x)\bigr)-\rho^2
 \le 2A|t|\rho-\tfrac12\rho^2
 \le 2A^2t^2.
\]
Taking the supremum over $z$ yields
$0\le\Psi_x(tk_y)\le2A^2t^2$.

Now write $g=\sum_{i=1}^m a_i k_{y_i}$. Since
$tg=m^{-1}\sum_{i=1}^m ma_i t k_{y_i}$, convexity gives
\[
 0\le\Psi_x(tg)
 \le\frac1m\sum_{i=1}^m\Psi_x(ma_i t k_{y_i})
 =O(t^2).
\]
This estimate holds for both signs of $t$. Hence
$\Psi_x(tg)/t\to0$ as $t\to0$, proving
\eqref{eq:derivative-at-kernel}.
\end{proof}

The following recognition result resembles the characterization of closed
subsets of Hilbert spaces via Fitzpatrick functions in
\cite[Section~5.2]{GMS12}.

\begin{proposition}\label{prop:kernel-recognition}
Suppose that $X$ is complete. An element $w\in V$ equals $k_z$ for
some $z\in X$ if and only if
\begin{equation}\label{eq:supporting-inequality}
 \Phi(f)+\Phi(w)\ge2\beta(f,w)\qquad(f\in V).
\end{equation}
\end{proposition}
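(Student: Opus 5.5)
The forward implication is immediate from the definition of $\Phi$. For the converse, I would first use the inequality \eqref{eq:supporting-inequality} with averages of kernels to locate a candidate point $z$. I would then use the directional derivative of Lemma~\ref{lem:quadratic-estimate} together with nondegeneracy of $\beta$ to identify $w$ with $k_z$.

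\emph{Necessity.} Suppose $w=k_z$. By Lemma~\ref{lem:Phi-basic}, $\Phi(k_z)=r(z)^2$. Evaluating the supremum \eqref{eq:Phi} at the point $z$ and using \eqref{eq:pairing-evaluation} gives
\[
 \Phi(f)\ge 2f(z)-r(z)^2=2\beta(f,k_z)-\Phi(k_z)
\]
for every $f\in V$, which is \eqref{eq:supporting-inequality}.

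\emph{Sufficiency, construction of $z$.} Assume $w$ satisfies \eqref{eq:supporting-inequality}. Choose $z_n\in X$ with
\[
 2w(z_n)-r(z_n)^2\ge\Phi(w)-\varepsilon_n,\qquad \varepsilon_n\to0 .
\]
Apply \eqref{eq:supporting-inequality} to $f=\tfrac12(k_{z_n}+k_{z_m})$. Using \eqref{eq:kernel-supremum-identity},
\[
 2f(y)-r(y)^2=\tfrac12\bigl(r(z_n)^2+r(z_m)^2\bigr)-\tfrac12\bigl(d(z_n,y)^2+d(z_m,y)^2\bigr).
\]
The triangle inequality gives $d(z_n,y)^2+d(z_m,y)^2\ge\tfrac12 d(z_n,z_m)^2$. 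Hence
\[
 \Phi(f)\le\tfrac12\bigl(r(z_n)^2+r(z_m)^2\bigr)-\tfrac14 d(z_n,z_m)^2 .
\]
Since $\beta(f,w)=\tfrac12\bigl(w(z_n)+w(z_m)\bigr)$, inequality \eqref{eq:supporting-inequality} rearranges to
\[
 \tfrac14 d(z_n,z_m)^2\le\Phi(w)-\tfrac12\bigl(2w(z_n)-r(z_n)^2\bigr)-\tfrac12\bigl(2w(z_m)-r(z_m)^2\bigr)\le\tfrac12(\varepsilon_n+\varepsilon_m).
\]
So $(z_n)$ is Cauchy, and by completeness it converges to some $z\in X$. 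Every $f\in V$ is continuous, being a combination of the continuous functions $k_x$. Therefore $z$ attains the supremum:
\[
 \Phi(w)=2w(z)-r(z)^2 .
\]
This Cauchy estimate is the step I expect to carry the real content, namely guessing the right test function; the rest is routine.

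\emph{Sufficiency, identification $w=k_z$.} For $g\in V$ and $t\in\R$, apply \eqref{eq:supporting-inequality} to $f=k_z+tg$. Using $\beta(k_z,w)=w(z)$ and the identity for $\Phi(w)$ just obtained, this gives
\[
 \Phi(k_z+tg)\ge 2w(z)+2t\beta(g,w)-\Phi(w)=\Phi(k_z)+2t\beta(g,w).
\]
Divide by $t>0$ and by $t<0$ and let $t\to0$. Lemma~\ref{lem:quadratic-estimate} then yields
\[
 2g(z)\ge2\beta(g,w)\quad\text{and}\quad 2g(z)\le2\beta(g,w).
\]
Hence $\beta(g,k_z)=g(z)=\beta(g,w)$ for all $g\in V$. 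Nondegeneracy of $\beta$ (Lemma~\ref{lem:pairing}) gives $w=k_z$.
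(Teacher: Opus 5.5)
Your proposal is correct and follows the paper's proof essentially step for step. You use the same test function $f=\tfrac12(k_{z_n}+k_{z_m})$ to get the Cauchy estimate for a maximizing sequence (the paper's defect inequality $\delta(a)+\delta(b)\ge\tfrac12 d(a,b)^2$), and the same one-sided derivative argument at $k_z$, via Lemma~\ref{lem:quadratic-estimate}, together with nondegeneracy of $\beta$ to conclude $w=k_z$.
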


\begin{proof}
If $w=k_z$, then \eqref{eq:Phi} and \eqref{eq:Phi-kernel} give
\[
 \Phi(f)+\Phi(w)\ge2f(z)=2\beta(f,w)\qquad(f\in V).
\]
Conversely, suppose that $w$ satisfies \eqref{eq:supporting-inequality}.
We first show that the supremum
\[
 \Phi(w)=\sup_{y\in X}\bigl(2w(y)-r(y)^2\bigr)
\]
is attained at some $z\in X$. For this point, we then verify
\begin{equation}\label{eq:beta}
 \beta(g,w)=g(z)=\beta(g,k_z)\qquad(g\in V).
\end{equation}
By the nondegeneracy of $\beta$ in Lemma~\ref{lem:pairing}, this will
imply $w=k_z$.

To find such a point $z$, consider the defect
\[
 \delta(a):=\Phi(w)-2w(a)+r(a)^2\ge0\qquad(a\in X).
\]
By the definition of $\Phi(w)$, we have $\inf_X\delta=0$, so we may
choose a sequence $(z_n)$ with $\delta(z_n)\to0$. By completeness of
$X$ and continuity of $\delta$, it suffices to show that $(z_n)$ is
Cauchy. This follows  once the next estimate has been verified:
\begin{equation}\label{eq:ab}
 \delta(a)+\delta(b)\ge\frac12d(a,b)^2\qquad(a,b\in X).
\end{equation}
To prove \eqref{eq:ab}, we apply \eqref{eq:supporting-inequality} with
$f=(k_a+k_b)/2$ and obtain
\begin{align*}
 w(a)+w(b)-\Phi(w)
 \le\Phi\!\left(\frac{k_a+k_b}{2}\right)
 =\sup_{y\in X}\bigl((k_a+k_b)(y)-r(y)^2\bigr).
\end{align*}
Using \eqref{eq:kernel-supremum-identity}, we can rewrite this supremum as
\begin{align*}
 \frac12\sup_{y\in X}
   \bigl(r(a)^2-d(a,y)^2+r(b)^2-d(b,y)^2\bigr)=\frac12\Bigl(r(a)^2+r(b)^2
   -\inf_{y\in X}\bigl(d(a,y)^2+d(b,y)^2\bigr)\Bigr).
\end{align*}
Since $d(a,b)^2\le2d(a,y)^2+2d(b,y)^2$ for every $y\in X$, we obtain
\[
 2\bigl(w(a)+w(b)-\Phi(w)\bigr)
 \le r(a)^2+r(b)^2-\frac12d(a,b)^2.
\]
This is precisely \eqref{eq:ab}. Thus $(z_n)$ is Cauchy, and its
limit $z\in X$ satisfies $\delta(z)=0$.

To verify \eqref{eq:beta}, fix $g\in V$ and apply
\eqref{eq:supporting-inequality} with $f=k_z+tg$, where $t\in\R$.
Since $\delta(z)=0$, we obtain
\[
 \Phi(k_z+tg)+2w(z)-r(z)^2
 \ge2\beta(k_z+tg,w)=2w(z)+2t\beta(g,w).
\]
Using $r(z)^2=\Phi(k_z)$, we conclude that
\begin{equation}\label{eq:last}
 \Phi(k_z+tg)-\Phi(k_z)\ge2t\beta(g,w).
\end{equation}
Dividing \eqref{eq:last} by $t>0$ and letting $t\downarrow0$, we
obtain from \eqref{eq:derivative-at-kernel}
\[
 g(z)\ge\beta(g,w).
\]
Dividing instead by $t<0$ reverses the inequality, and letting
$t\uparrow0$ gives
\[
 g(z)\le\beta(g,w).
\]
Together, these inequalities prove \eqref{eq:beta}, and hence $w=k_z$.
\end{proof}

\subsection{Interplay of additivity and non-commutativity}
\label{subsec:algebra}

From now on, we assume that there are non-commuting linear projections
$P,Q:V\to V$, induced by factor projections $p,q:X\to X$ at $o$ as in
Lemma~\ref{lem:linear-product-decomposition}. Arguing similarly to
Lemma~\ref{lem:L-projections}, we will exploit the additivity of $\Phi$
described in Lemma~\ref{lem:additivity} to obtain strong restrictions
on these non-commuting metric projections.

We call a vector $h\in V$ a \emph{quadratic direction} for $\Phi$ if
$t\mapsto\Phi(f+th)$ is a polynomial of degree at most two for every
$f\in V$. A slight modification of the following proof shows that
every vector in the image of $PQ-QP$ is a quadratic direction for
$\Phi$. For our purposes, the following statement suffices:

\begin{lemma}\label{lem:finite-difference}
If $PQ\ne QP$, then there exists a nonzero quadratic direction
$h\in V$ for $\Phi$.
\end{lemma}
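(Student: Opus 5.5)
The plan is to imitate the proof of Lemma~\ref{lem:L-projections}. There, the norm identity was combined with the triangle inequality. Here the additivity \eqref{eq:Phi-splitting} will be used twice, once for $P$ and once for $Q$, and combined into a functional equation along one fixed direction.

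\emph{Choice of $h$.} As in the proof of Lemma~\ref{lem:L-projections}, if $(I-P)Q\cdot v=0$ for all $v\in\im P$ and $PQ\cdot v=0$ for all $v\in\ker P$, then $Q$ preserves $\im P$ and $\ker P$, so $PQ=QP$. Replacing $P$ by $I-P$ if necessary (this is again an induced projection, and \eqref{eq:Phi-splitting} is symmetric in $P$ and $I-P$), we may therefore pick $v\in\im P$ with $h:=(I-P)Q\cdot v\neq0$. Put $s:=PQ\cdot v\in\im P$. Then
\[
Q\cdot v=s+h,\qquad (I-Q)\cdot v=(v-s)-h,
\]
where $v-s\in\im P$ and $h\in\ker P$.

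\emph{Separation of variables.} The rectangle identity \eqref{eq:quadrangle} for $P$, applied with $g=a+b$, $a\in\im P$, $b\in\ker P$, gives
\[
\Phi(f+a+b)=\Phi(f+a)+\Phi(f+b)-\Phi(f)\qquad(f\in V).
\]
We apply \eqref{eq:quadrangle} for $Q$ with $g=tv$. Then we expand the two terms $\Phi(f+ts+th)$ and $\Phi(f+t(v-s)-th)$ by the separation identity for $P$, with the base point replaced by $f+uh$. Setting $\varphi(u)=\Phi(f+uh)$, we expect after bookkeeping that all terms depending jointly on $u$ and the $\im P$-directions cancel. This should give
\[
\varphi(u+t)+\varphi(u-t)-2\varphi(u)=C(t),
\]
where $C(t)$ is a combination of $\Phi(f+tv)$, $\Phi(f+ts)$, $\Phi(f+t(v-s))$ and $\Phi(f)$, and so does not depend on $u$. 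Checking that the coefficients of $\Phi(f+uh)$ indeed add up to $2$ is the main computational step. I expect it to work because every term involving $\im P$ splits off additively from the $h$-direction.

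\emph{Conclusion.} The function $\varphi$ is continuous, since $\Phi$ is locally Lipschitz by Lemma~\ref{lem:Phi-basic}. Its second symmetric difference with step $t$ is independent of $u$. Differencing the displayed equation once more in $u$ shows that all third finite differences of $\varphi$ vanish. A continuous function with this property is a polynomial of degree at most two; this follows, for instance, from the standard Fr\'echet-type argument or by mollifying. Since $f\in V$ was arbitrary, $h$ is a nonzero quadratic direction.

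The main obstacle I anticipate is the careful expansion in the separation step. One must make sure that the shift by $uh$ commutes with the $P$-splitting, which holds because $uh\in\ker P$ can be absorbed into the base point. One must also check that no residual $u$-dependence survives in $C(t)$.
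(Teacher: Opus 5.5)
Your proposal is correct and follows the paper's proof. It uses the same choice $h=(I-P)Q\cdot v$ and the rectangle identity for $Q$ with $g=tv$, split along $P$. For the resulting second-difference equation you invoke Fr\'echet's theorem or mollification, where the paper instead integrates to bootstrap $\varphi$ to $\mathcal C^2$. The bookkeeping you left open does close: expanding at the base point $f+uh$ gives $\varphi(u+t)+\varphi(u-t)-2\varphi(u)=\Phi(f+tv)+\Phi(f)-\Phi(f+ts)-\Phi(f+t(v-s))$, which for $f\in\ker P$ is exactly the right-hand side of \eqref{eq:transfer}.
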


\begin{proof}
Since $PQ\ne QP$, the projection $Q$ fails to preserve at least one
of the subspaces $\im P$ and $\ker P$. Replacing $P$ by $I-P$ if
necessary, we find $v\in\im P$ such that $Q\cdot v\notin\im P$.
Hence
\[
 h:=(I-P)Q\cdot v\ne0.
\]
We claim that this $h$ is a quadratic direction for $\Phi$.
By construction,
\begin{equation}\label{eq:3eq}
 P\cdot v=v,\qquad P\cdot h=0,\qquad
 (I-P)(I-Q)\cdot v=-h.
\end{equation}
By Lemma~\ref{lem:additivity}, for any $f\in V$ and $t\in\R$ we have
\[
 \Phi(f+th)=\Phi(P\cdot f)+\Phi((I-P)\cdot f+th).
\]
Thus it suffices to prove quadraticity under the assumption
$P\cdot f=0$.

Applying the rectangle identity \eqref{eq:quadrangle} with $P$
replaced by $Q$ and $g=tv$, we obtain
\begin{equation}\label{eq:quad}
 \Phi(f+tv)+\Phi(f)
 =\Phi(f+tQ\cdot v)+\Phi(f+t(I-Q)\cdot v).
\end{equation}
Applying the $P$-splitting to the terms in \eqref{eq:quad} and using
$P\cdot f=0$ and \eqref{eq:3eq}, we obtain
\begin{align*}
 \Phi(f+tv)&=\Phi(tv)+\Phi(f),\\
 \Phi(f+tQ\cdot v)&=\Phi(tPQ\cdot v)+\Phi(f+th),\\
 \Phi(f+t(I-Q)\cdot v)&=\Phi(tP(I-Q)\cdot v)+\Phi(f-th).
\end{align*}
Substituting these expressions into \eqref{eq:quad} gives
\begin{equation}\label{eq:transfer}
 \Phi(f+th)+\Phi(f-th)-2\Phi(f)
 =\Phi(tv)-\Phi(tPQ\cdot v)-\Phi(tP(I-Q)\cdot v).
\end{equation}
The right-hand side is independent of $f\in\ker P$.

Fix $f\in\ker P$ and put $g(s)=\Phi(f+sh)$. Since $f+sh\in\ker P$,
\eqref{eq:transfer} shows that
\begin{equation}\label{eq:second}
 g(s+t)+g(s-t)-2g(s)=C(t)\qquad(s,t\in\R),
\end{equation}
where $C(t)$ does not depend on $s$.

By Lemma~\ref{lem:Phi-basic}, the function $g$ is continuous, and so
is $C$, since $C(t)=g(t)+g(-t)-2g(0)$. Integrating
\eqref{eq:second} with respect to $t$ over $[0,1]$, we obtain
\begin{equation}\label{eq:int}
 2g(s)=\int_{s-1}^{s+1}g(u)\,du-\int_0^1 C(t)\,dt.
\end{equation}
The right-hand side is $\mathcal C^1$, so $g$ is $\mathcal C^1$.
Using \eqref{eq:int} once more, we conclude that $g$ is
$\mathcal C^2$, and hence so is $C$.

Differentiating \eqref{eq:second} twice with respect to $t$ at $t=0$
gives $2g''(s)=C''(0)$. Thus, $g''$ is constant, and $g$ is a
polynomial of degree at most two. Hence, 
$h\ne0$ is a quadratic direction for $\Phi$.
\end{proof}

\subsection{Quadratic directions, splitting lines, and refinement}
\label{subsec:realization}

We now study arbitrary quadratic directions for $\Phi$.  Non-commuting factor projections do not matter anymore.

\begin{proposition}\label{prop:quadratic-direction}
Let $0\ne h\in V$ be a quadratic direction for $\Phi$.
Then $\beta(h,h)>0$ and
\begin{equation}\label{eq:quadratic-translation}
 \Phi(f+th)=\Phi(f)+2t\beta(f,h)+t^2\beta(h,h)
 \qquad(f\in V,\ t\in\R).
\end{equation}
\end{proposition}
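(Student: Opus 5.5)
The plan is to write the quadratic polynomial explicitly and identify its coefficients. Since $h$ is a quadratic direction, for each $f\in V$ there are numbers $a(f),c(f)$ with
\[
 \Phi(f+th)=\Phi(f)+a(f)\,t+c(f)\,t^2\qquad(t\in\R).
\]
It then suffices to prove three things: $c(f)$ does not depend on $f$; $a(f)=2\beta(f,h)$; and $c=\beta(h,h)>0$.

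\emph{The leading coefficient is constant.} Set $M=\|f\|_*+|t|\,\|h\|_*$, which bounds both $\|f+th\|_*$ and $\|th\|_*$. The Lipschitz estimate in Lemma~\ref{lem:Phi-basic} gives
\[
 |\Phi(f+th)-\Phi(th)|\le4M\|f\|_*=O(|t|).
\]
Dividing by $t^2$ and letting $t\to\infty$ shows $c(f)=c(0)=:c$. For $f=0$ we have $\Phi(0)=0$, so $\Phi(th)=a(0)t+ct^2$. This expression is nonnegative for all $t$ by \eqref{eq:Phi-bound}, which forces $a(0)=0$ and $c\ge0$. Hence $\Phi(th)=ct^2$.

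\emph{The linear coefficient is linear.} Fix $t$. The map $f\mapsto\Phi(f+th)-ct^2=\Phi(f)+a(f)t$ is convex in $f$, being a translate of the convex function $\Phi$. For $t>0$, dividing by $t$ keeps it convex, and letting $t\to+\infty$ gives the pointwise limit $a(f)$, since $\Phi(f)/t\to0$. Thus $a$ is convex. For $t<0$, dividing by $t$ makes the expression concave, and letting $t\to-\infty$ shows that $a$ is concave. So $a$ is affine on $V$, and because $a(0)=0$ it is linear. Next, $a(k_x)$ is the derivative of $t\mapsto\Phi(k_x+th)$ at $t=0$, which equals $2h(x)=2\beta(k_x,h)$ by Lemma~\ref{lem:quadratic-estimate} and \eqref{eq:pairing-evaluation}. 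Since the $k_x$ span $V$, linearity gives
\[
 a(f)=2\beta(f,h)\qquad(f\in V).
\]

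\emph{Identifying $c$.} Take $f=-h$ and $t=1$:
\[
 0=\Phi(0)=\Phi(-h)+a(-h)+c=c-2\beta(h,h)+c,
\]
using $\Phi(-h)=c$ and $a(-h)=-2\beta(h,h)$. Hence $c=\beta(h,h)$, and \eqref{eq:quadratic-translation} follows.

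\emph{Positivity.} If $c=0$, then $\Phi(th)=0$ for all $t$. Choose $z$ with $h(z)\neq0$, which exists since $h\neq0$. Then
\[
 \Phi(th)\ge2t\,h(z)-r(z)^2,
\]
and this lower bound is positive for $|t|$ large with the appropriate sign, a contradiction. Thus $\beta(h,h)=c>0$.

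I expect the main obstacle to be the linearity of $a$; the convex-and-concave limiting argument above is the step to check most carefully.
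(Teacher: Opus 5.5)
Your proof is correct and follows the same outline as the paper's: you show the leading coefficient is constant, get linearity of the linear coefficient from limits of convex functions, identify it through Lemma~\ref{lem:quadratic-estimate}, and then evaluate at a specific point to find $\beta(h,h)$. The differences are local: you get constancy of the leading coefficient from the Lipschitz bound of Lemma~\ref{lem:Phi-basic} rather than the paper's ``convex and bounded above, hence constant'' argument, and you get positivity directly from $\Phi(th)\ge 2t\,h(z)-r(z)^2$ rather than via nondegeneracy of $\beta$.
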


\begin{proof}
Since $h$ is a quadratic direction, we can write
\[
 \Phi(f+th)=\Phi(f)+2c(f)t+a(f)t^2.
\]
We first show that $a(f)$ is constant. Since $\Phi$ is convex,
\[
 a(f)=\lim_{t\to+\infty}\frac{\Phi(f+th)}{t^2}
\]
is a nonnegative convex function of $f$. Moreover,
\[
 \Phi(f+th)\le\tfrac12\Phi(2f)+\tfrac12\Phi(2th)
\]
gives $a(f)\le2a(0)$ after division by $t^2$ and passage to the limit.
A convex function bounded above on an entire real vector space is
constant. Thus $a(f)=\alpha$ for every $f$.

We next show that $c$ is affine. Since $\alpha$ is independent of $f$,
\begin{align*}
 c(f)&=\lim_{t\to+\infty}\frac{\Phi(f+th)-\alpha t^2}{2t},\\
 -c(f)&=\lim_{t\to+\infty}\frac{\Phi(f-th)-\alpha t^2}{2t}
\end{align*}
are limits of convex functions of $f$. Hence both $c$ and $-c$ are
convex, so $c$ is affine. The bound
$0\le\Phi(th)\le t^2\|h\|_*^2$ from \eqref{eq:Phi-bound}
gives $c(0)=0$, so $c$ is linear. By
\eqref{eq:derivative-at-kernel},
\[
 c(k_x)=h(x)=\beta(k_x,h)\qquad(x\in X).
\]
Since the generators $k_x$ span $V$, it follows that
$c(f)=\beta(f,h)$ for every $f\in V$.

It remains to identify $\alpha$. We have $\Phi(th)=\alpha t^2$,
whereas the expansion at $f=h$ gives
\[
 \alpha(1+t)^2=\Phi(h+th)
 =\alpha+2t\beta(h,h)+\alpha t^2.
\]
Comparing the linear terms yields $\alpha=\beta(h,h)$, proving
\eqref{eq:quadratic-translation}.
Finally, since $h\ne0$ and $\beta$ is nondegenerate by
Lemma~\ref{lem:pairing}, there is $f\in V$ with $c(f)=\beta(f,h)\ne0$.
The polynomial $\Phi(f+th)$ is nonnegative for every $t\in\R$ by
Lemma~\ref{lem:Phi-basic} and has a nonzero linear coefficient.
Its quadratic coefficient $\alpha=\beta(h,h)$ must therefore be positive.
\end{proof}

A \emph{splitting line} in $X$ is a subset corresponding to
$\{z\}\times\R$ under an isometry $X\cong Z\times\R$.

The vector $k_x+th$ in \eqref{eq:quadratic-translation} need not,
a priori, represent a point of $X$. If $X$ is complete, we will show
that, for each fixed $x\in X$, the vectors $k_x+th$, $t\in\R$,
represent the points of a splitting line. Here completeness is used
only through Proposition~\ref{prop:kernel-recognition}.

\begin{lemma}[Realization lemma]\label{lem:realization}
Suppose that $X$ is complete. If $V$ contains a nonzero quadratic
direction for $\Phi$, then $X$ contains a splitting line.
\end{lemma}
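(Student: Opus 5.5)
Throughout, $h\ne0$ is a quadratic direction. By Proposition~\ref{prop:quadratic-direction} we have $\beta(h,h)>0$, so after rescaling we may assume $\beta(h,h)=1$; rescaling preserves being a quadratic direction. The plan has three steps: realize $k_x+th$ as a kernel $k_{\varphi_t(x)}$ via Proposition~\ref{prop:kernel-recognition}; show that the resulting maps $\varphi_t$ form a one-parameter group of isometries; and use $h$ as a linear coordinate to split off a line.

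\emph{Step 1 (realization).} Fix $x\in X$ and $t\in\R$, and put $w=k_x+th$. By \eqref{eq:quadratic-translation} and \eqref{eq:Phi-kernel},
\[
 \Phi(w)=r(x)^2+2th(x)+t^2,
\]
where we used $\beta(k_x,h)=h(x)$. Let $f\in V$. Applying \eqref{eq:quadratic-translation} to the vector $f-th$, translated by $th$, gives
\[
 \Phi(f)=\Phi(f-th)+2t\beta(f,h)-t^2.
\]
Evaluating the supremum \eqref{eq:Phi} at the point $z=x$ gives
\[
 \Phi(f-th)\ge 2f(x)-2th(x)-r(x)^2.
\]
Adding these two relations to the formula for $\Phi(w)$ yields
\[
 \Phi(f)+\Phi(w)\ge 2f(x)+2t\beta(f,h)=2\beta(f,w).
\]
This is exactly \eqref{eq:supporting-inequality}. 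Since $X$ is complete, Proposition~\ref{prop:kernel-recognition} provides a point $\varphi_t(x)\in X$ with $k_{\varphi_t(x)}=k_x+th$. By injectivity of $x\mapsto k_x$ (Lemma~\ref{lem:pairing}), this point is unique. I expect this verification of the supporting inequality to be the only real obstacle, and the exact quadratic expansion is what makes it work.

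\emph{Step 2 (flow of translations).} Uniqueness of the realizing point gives $\varphi_0=\id$ and $\varphi_s\circ\varphi_t=\varphi_{s+t}$, so each $\varphi_t$ is a bijection. By \eqref{eq:pairing-distance},
\[
 d(\varphi_t x,\varphi_t y)^2=\beta(k_x-k_y,k_x-k_y)=d(x,y)^2,
\]
so each $\varphi_t$ is an isometry. Moreover, $d(x,\varphi_t x)^2=t^2\beta(h,h)=t^2$. Evaluating via \eqref{eq:pairing-evaluation} gives
\[
 h(\varphi_t x)=\beta(h,k_x+th)=h(x)+t .
\]

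\emph{Step 3 (splitting).} Let $Z=h^{-1}(0)$ and define $\pi(x)=\varphi_{-h(x)}(x)$. By Step 2, $\pi(x)\in Z$ and $k_{\pi(x)}=k_x-h(x)h$. Consider the map $\Theta:X\to Z\times\R$, $x\mapsto(\pi(x),h(x))$, with inverse $(z,s)\mapsto\varphi_s(z)$. Using $\beta(k_x-k_y,h)=h(x)-h(y)$ and $\beta(h,h)=1$, a direct expansion gives
\[
 d(\pi x,\pi y)^2=\beta\bigl(k_x-k_y-(h(x)-h(y))h,\ k_x-k_y-(h(x)-h(y))h\bigr)=d(x,y)^2-(h(x)-h(y))^2 .
\]
Hence $\Theta$ is an isometry $X\cong Z\times\R$, where $Z$ carries the induced metric. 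In particular, $\{\varphi_t(\pi(o)):t\in\R\}$ is a splitting line, which proves the lemma.
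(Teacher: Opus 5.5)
Your proposal is correct and follows essentially the same route as the paper. It uses the same verification of the supporting inequality via the exact quadratic expansion, the same translation flow $k_{\varphi_t(x)}=k_x+th$, and the same splitting along $Z=h^{-1}(0)$. The only difference is cosmetic: you verify the product structure through the projection $\pi$, whereas the paper uses the general distance formula for $d(\tau_t(x),\tau_s(y))$.
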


\begin{proof}
Choose a nonzero quadratic direction $h$. By
Proposition~\ref{prop:quadratic-direction}, we may rescale $h$ so that
$\beta(h,h)=1$. Then \eqref{eq:quadratic-translation} becomes
\[
 \Phi(f+th)=\Phi(f)+2t\beta(f,h)+t^2
 \qquad(f\in V,\ t\in\R).
\]

Fix $x\in X$ and $t\in\R$, set $w=k_x+th$, and let $f\in V$ be arbitrary.
Applying this identity to $f$ with parameter $-t$ and to $k_x$ with
parameter $t$, we obtain
\[
 \Phi(f)+\Phi(w)
 =\Phi(f-th)+2t\beta(f,h)-t^2
   +\Phi(k_x)+2t\beta(k_x,h)+t^2.
\]
By the definition of $\Phi$,
$\Phi(f-th)\ge 2(f-th)(x)-r(x)^2$. Since $\Phi(k_x)=r(x)^2$,
we conclude that
\begin{align*}
 \Phi(f)+\Phi(w)
 &\ge 2\beta(f-th,k_x)+2t\beta(f,h)+2t\beta(k_x,h)\\
 &=2\beta(f,k_x+th)=2\beta(f,w).
\end{align*}
By Proposition~\ref{prop:kernel-recognition}, there exists a point
$z\in X$ with $w=k_z$. By injectivity of the map $y\mapsto k_y$,
this point is unique. We denote it by $\tau_t(x)$. Thus the map
$\tau:\R\times X\to X$, defined by $\tau(t,x)=\tau_t(x)$, is
well-defined and satisfies
\[
 k_{\tau_t(x)}=k_x+th.
\]
These identities imply
$\tau_s\circ\tau_t=\tau_{s+t}$ and $\tau_0=\id$.
For $x,y\in X$ and $s,t\in\R$, \eqref{eq:pairing-distance} gives
\begin{align*}
 d(\tau_t(x),\tau_s(y))^2
 &=\beta\bigl(k_{\tau_t(x)}-k_{\tau_s(y)},
               k_{\tau_t(x)}-k_{\tau_s(y)}\bigr)\\
 &=\beta\bigl(k_x-k_y+(t-s)h,\,k_x-k_y+(t-s)h\bigr).
\end{align*}
Using bilinearity of $\beta$, $\beta(h,h)=1$, and
\eqref{eq:pairing-distance} once more, we obtain
\begin{equation}\label{eq:translation}
 d(\tau_t(x),\tau_s(y))^2
 =d(x,y)^2+2(t-s)(h(x)-h(y))+(t-s)^2.
\end{equation}
In particular, each $\tau_t$ is an isometry, with inverse
$\tau_{-t}$. Also,
\[
 h(\tau_t(x))=\beta(h,k_x+th)=h(x)+t.
\]
Every orbit therefore meets $Z=h^{-1}(0)$ exactly once. By
\eqref{eq:translation}, for $z,z'\in Z$,
\[
 d(\tau_t(z),\tau_s(z'))^2=d(z,z')^2+(t-s)^2.
\]
Consequently, the map
\[
 Z\times\R\longrightarrow X,\qquad
 (z,s)\longmapsto\tau_s(z)
\]
is a surjective isometry, and each orbit is a splitting line.
\end{proof}

\begin{proof}[Proof of Theorem~\ref{thm:refinement}]
The bounded case follows from Corollary~\ref{cor:bounded}. We may therefore
assume that $X$ is complete and does not admit $\R$ as a direct factor.
Let $p,q$ be factor projections at $o$, and let $P,Q$ be the induced
linear projections on $V$. If $PQ\ne QP$,
Lemma~\ref{lem:finite-difference} gives a nonzero quadratic direction
$h\in V$ for $\Phi$. Lemma~\ref{lem:realization}
gives a line factor of $X$, contrary
to the hypothesis. Thus all induced linear projections commute, and
so do all factor projections at $o$ by
Subsection~\ref{subsec:geometric-algebraic-commutation}.
Lemma~\ref{lem:geometric-refinement} gives the common refinement.
\end{proof}

\section{The Boolean algebra of factor projections}
\label{sec:projections}

Boolean algebras of projections are a basic tool in the $L^p$-structure
theory of Banach spaces; see Cunningham~\cite{Cun60},
Behrends~\cite{Beh76}, and
\cite[Theorem~I.1.10(b) and pp.~45--47]{HWW93}. The arguments below
develop metric counterparts of this structure, including completeness
of the projection algebra, its atomic part, and product decompositions.

Let $X$ be a metric space with basepoint $o$, and write $r(x)=d(o,x)$.
Throughout this section, all factor projections at $o$ are assumed to
commute. Let $\cP$ be their set. We write $0$ for the constant map to
$o$, $1$ for the identity, and $p^{\perp}=\bar p$ for the complement of $p$.

\subsection{Boolean structure}
\label{subsec:boolean-structure}
The set $\cP$ is a Boolean algebra with
\begin{equation*}
 p\wedge q=p\circ q,\qquad
 p\vee q=(p^{\perp}\circ q^{\perp})^{\perp},\qquad
 p\le q\ \Longleftrightarrow\ p\circ q=p.
\end{equation*}
Here $p\vee q$ is the least upper bound of $p,q$, and $p\wedge q$ is
their greatest lower bound. To describe these operations geometrically, write
$X=A\times B=C\times D$, with $p,q$ projecting onto $A,C$, respectively,
and identify all factors with their fibers through $o$.
The complements $p^{\perp},q^{\perp}$ project onto the cofactors $B,D$.
Using the common refinement of Lemma~\ref{lem:geometric-refinement},
the corresponding factor fibers satisfy
\begin{equation}\label{eq:formel}
 (p\wedge q)(X)=A\cap C,\qquad
 (p\vee q)(X)=(A\cap C)\times(A\cap D)\times(B\cap C).
\end{equation}
The order relation $p\le q$ is equivalent to $A\subseteq C$.
We call $p,q$ \emph{disjoint} if $p\wedge q=0$, equivalently
$A\cap C=\{o\}$.

Changing the basepoint replaces the factor fibers and their common
refinements by the corresponding parallel fibers. Thus commutation
holds at every basepoint, and the resulting Boolean algebras are
canonically isomorphic.

An \emph{atom} of a Boolean algebra is a minimal nonzero element.
The atoms of $\cP$ are precisely the projections onto irreducible
factor fibers. Indeed, a nontrivial splitting of $p(X)$ extends to a
splitting of $X$ and gives a proper nonzero subprojection of $p$.
Conversely, any $0<q<p$ gives the nontrivial splitting
$p(X)=q(X)\times(p\wedge q^{\perp})(X)$.

\subsection{Completeness and arbitrary products}
\label{subsec:complete-projections}
The following limiting argument is the same as in the proof
of~\cite[Lemma~4.1]{FLS25} for increasing Hilbert factor fibers.
For the Banach-space counterpart, see~\cite[Theorem~I.1.10(b)
and pp.~46--47]{HWW93}.

\begin{lemma}\label{lem:complete-projections}
Suppose that $X$ is complete. Then $\cP$ is a complete Boolean algebra:
every family $(p_\alpha)$ in $\cP$ has a least upper bound
$\bigvee_\alpha p_\alpha$.

For pairwise disjoint $(e_i)_{i\in I}$, put $e=\bigvee_{i\in I}e_i$.
The canonical coordinate map $x\mapsto(e_i(x))_{i\in I}$ gives an
isometric identification
\begin{equation*}
 e(X)=\prod_{i\in I}(e_i(X),o).
\end{equation*}
\end{lemma}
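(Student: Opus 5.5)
We first treat pairwise disjoint families, since the general supremum reduces to them. Let $(e_i)_{i\in I}$ be pairwise disjoint, and for finite $F\subseteq I$ put $e_F=\bigvee_{i\in F}e_i$. By induction on $|F|$, using \eqref{eq:formel} and Lemma~\ref{lem:geometric-refinement}, $e_F(X)=\prod_{i\in F}e_i(X)$ via $x\mapsto(e_i(x))_{i\in F}$. Disjointness gives $e_F\wedge e_j=0$ for $j\notin F$, so $e_{F\cup\{j\}}(X)=e_F(X)\times e_j(X)$. In particular $r(e_F(x))^2=\sum_{i\in F}r(e_i(x))^2\le r(x)^2$, since $e_F$ is a factor projection. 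Hence $\sum_i r(e_i(x))^2<\infty$, and only countably many terms are nonzero.

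Next, for fixed $x$, the net $e_F(x)$ indexed by finite $F$ is Cauchy. For $F\subseteq G$ we have $e_F=e_F\wedge e_G$, and the $\ell^2$ identity gives
\[
 d(e_G(x),e_F(x))^2=\sum_{i\in G\setminus F}r(e_i(x))^2 .
\]
By completeness we define $e(x)=\lim_F e_F(x)$. Passing to the limit in the Pythagorean identities for the splittings $X=e_F(X)\times e_F^\perp(X)$, we get
\[
 d(x,y)^2=d(e(x),e(y))^2+d(\bar e(x),\bar e(y))^2,
\qquad \bar e(x)=\lim_F e_F^\perp(x).
\]
The harder part is to show that $e$ is a factor projection, i.e.\ that $(e,\bar e)$ is surjective onto $e(X)\times\bar e(X)$. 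Given $a=e(x)$ and $b=\bar e(y)$, we take $z=\lim_F\bigl(e_F(x)\text{ combined with }e_F^\perp(y)\bigr)$. That is, $z_F$ is the point with $e_F(z_F)=e_F(x)$ and $e_F^\perp(z_F)=e_F^\perp(y)$. This net is Cauchy by the same $\ell^2$ tail estimate applied to $x$ and $y$ separately, after using commutation to compare $z_F$ and $z_G$ coordinatewise. Its limit satisfies $e(z)=a$ and $\bar e(z)=b$. I expect this step to be the main obstacle: one must check carefully that $e$ and $\bar e$ are the projections of an actual product structure, and that the image $e(X)$ equals the fiber $\bar e^{-1}(o)$. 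This will follow because $e$ is a $1$-Lipschitz idempotent with these additive distance identities. Minimality follows as well: any $q\ge e_i$ for all $i$ satisfies $q\circ e_F=e_F$, so $q\circ e=e$ by continuity. The coordinate map $e(X)\to\prod(e_i(X),o)$ is then an isometric embedding by the limit identity, and it is onto: for $(x_i)$ in the pointed product, the points $\sum_{i\in F}x_i\in e_F(X)$ (meaning the point with those coordinates) form a Cauchy net whose limit has coordinates $(x_i)$.

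For an arbitrary family $(p_\alpha)$ we well-order it and disjointify: $e_\alpha=p_\alpha\wedge\bigl(\bigvee_{\beta<\alpha}p_\beta\bigr)^\perp$, constructing the suprema by transfinite recursion. At limit stages we apply the disjoint case to the already disjoint $e_\beta$, noting that $\bigvee_{\beta<\alpha}e_\beta=\bigvee_{\beta<\alpha}p_\beta$ by induction. Then $\bigvee_\alpha e_\alpha$ is the least upper bound of the $p_\alpha$, since each $p_\alpha\le\bigvee_{\beta\le\alpha}e_\beta$. This also gives the stated identification of $e(X)$.
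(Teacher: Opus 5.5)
Your proof is correct, but it is organized differently from the paper's. The paper never isolates disjoint families. It adjoins all finite joins $p_{\alpha_1}\vee\cdots\vee p_{\alpha_k}$ to the family, which does not change its upper bounds, so that $(p_\alpha)$ becomes an increasing net. It then takes pointwise limits of $p_\alpha$ and $p_\alpha^{\perp}$ as in Lemma~4.1 of Foertsch--Lytchak--Soultanis. Only afterwards does it read off the $\ell^2$-identification for disjoint families, from the fact that $e(X)$ is the closure of the union of the finite products.

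Your net $(e_F)_F$ is exactly this increasing net in the disjoint case, and disjointness enters your Cauchy estimate only through the tail formula. For an arbitrary increasing net one has $d(s(x),s'(x))^2=r(s'(x))^2-r(s(x))^2$ whenever $s\le s'$, with $r(s(x))$ increasing and bounded by $r(x)$. So your limiting argument applies verbatim to the directed family of finite joins: convergence, the Pythagorean identity in the limit, surjectivity of $(e,\bar e)$ via the glued points $z_F$, and minimality by continuity. The transfinite disjointification, with its appeal to a well-ordering, is therefore unnecessary, though harmless. What your route buys is self-containedness: you carry out the limiting steps that the paper delegates to the cited reference, and the product structure of $e(X)$ comes for free.

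Two routine points you leave implicit deserve a line each:
\begin{itemize}
\item $\lim_F e_F^{\perp}(x)$ exists because $d(e_F^{\perp}(x),e_G^{\perp}(x))=d(e_F(x),e_G(x))$.
\item $e\circ e=e$ follows from $e_G\circ e_F=e_G$ for $G\subseteq F$ together with continuity of $e_G$. Once $e$ is idempotent, the limit identity $r(z)^2=r(e(z))^2+r(\bar e(z))^2$ applied to $z=e(x)$ gives $\bar e\circ e\equiv o$. Hence $e(X)=\bar e^{-1}(o)$ and $e\in\cP$.
\end{itemize}
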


\begin{proof}
Set $q_\alpha=p_\alpha^{\perp}$. We may add to the family $(p_\alpha)$
the finite joins $p_{\alpha_1}\vee\cdots\vee p_{\alpha_k}$ without
changing its set of upper bounds.

Thus, without loss of generality, we may assume that $(p_\alpha)$ is
an increasing net and $(q_\alpha)$ is a decreasing net. Set
\[
 A=\overline{\bigcup_\alpha p_\alpha(X)},\qquad
 B=\bigcap_\alpha q_\alpha(X).
\]
By completeness of $X$, as in the proof of~\cite[Lemma~4.1]{FLS25},
$(p_\alpha)$ converges pointwise to a map $p:X\to A$, and
$(q_\alpha)$ converges pointwise to a map $q:X\to B$. As in the cited
proof, continuity and completeness show that $(p,q):X\to A\times B$
is a surjective isometry.

Since $A$ is the smallest closed subset containing all $p_\alpha(X)$,
we deduce that $p$ is the least upper bound of $(p_\alpha)$. This
finishes the proof of the first statement.

The second statement follows from \eqref{eq:formel} when $|I|=2$,
and by induction for finite $I$. For infinite $I$, the construction
above shows that $e(X)$ is the closure of
\[
 \bigcup_{\substack{F\subset I\\ |F|<\infty}}
       \prod_{i\in F}(e_i(X),o)\subset X\,,
\]
finishing the proof of the second claim.
\end{proof}

\subsection{Atomic and atomless factors}
\label{subsec:atomic-factor}
A Boolean algebra is \emph{atomic} if every nonzero element contains an
atom, and \emph{atomless} if it has no atoms. Every complete Boolean
algebra $\cB$ has a unique decomposition into an atomic and an atomless
factor; see~\cite[Corollary~2, p.~227]{GH09}.
Together with Lemma~\ref{lem:complete-projections} and the correspondence
between atoms and irreducible factor fibers, this gives the following
metric decomposition.

\begin{proposition}\label{prop:atomic-decomposition}
Suppose that $X$ is complete, and let $(e_i)_{i\in I}$ be all atoms
of $\cP$. Put $a=\bigvee_{i\in I}e_i$. Then the splitting
\begin{equation*}
 X=X_{\mathrm{at}}\times X_{\mathrm{na}},
 \qquad X_{\mathrm{at}}=a(X),\quad X_{\mathrm{na}}=a^{\perp}(X),
\end{equation*}
is canonical for the fixed basepoint $o$. The factor $X_{\mathrm{at}}$
is the pointed product of the irreducible factor fibers $e_i(X)$,
while $X_{\mathrm{na}}$ has no irreducible factor.
\end{proposition}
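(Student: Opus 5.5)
The plan is to read everything off the Boolean algebra $\cP$, which is complete by Lemma~\ref{lem:complete-projections}. Since the atoms $e_i$ are minimal nonzero elements, distinct atoms satisfy $e_i\wedge e_j\le e_i$ and $e_i\wedge e_j\ne e_i$, so $e_i\wedge e_j=0$. Thus the $e_i$ are pairwise disjoint. The second part of Lemma~\ref{lem:complete-projections}, applied with $e=a$, gives the identification $X_{\mathrm{at}}=a(X)=\prod_{i\in I}(e_i(X),o)$. By the correspondence established in Subsection~\ref{subsec:boolean-structure}, each $e_i(X)$ is an irreducible factor fiber. The splitting $X=a(X)\times a^\perp(X)$ is just the factor decomposition associated with the element $a\in\cP$. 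Canonicity is immediate: the set of atoms is determined by $\cP$, which depends only on $X$ and $o$, and the join $a$ is unique.

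It remains to show that $X_{\mathrm{na}}$ has no irreducible factor. Suppose $a^\perp(X)=Y\times Z$ with $Y$ irreducible, where factors are taken through $o$.

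\emph{Step 1.} Extend this to a decomposition of $X$. We have $X=Y\times(Z\times a(X))$, which gives a factor projection $q\in\cP$ with $q(X)=Y$. Then $q\le a^\perp$, since $Y\subseteq a^\perp(X)$ and the order relation corresponds to inclusion of fibers.

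\emph{Step 2.} Show that $q$ is an atom. If $0<q'<q$ in $\cP$, then $q'(X)\subsetneq Y$ is a nontrivial fiber. The refinement formula \eqref{eq:formel} gives $Y=q(X)=q'(X)\times(q\wedge q'^\perp)(X)$, and both factors are nontrivial. This contradicts the irreducibility of $Y$. This is exactly the atom/irreducible correspondence already noted in Subsection~\ref{subsec:boolean-structure}.

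\emph{Step 3.} Derive the contradiction. Since $q$ is an atom, $q=e_j$ for some $j$, so $q\le a$. Together with $q\le a^\perp$ this gives $q\le a\wedge a^\perp=0$, contradicting $q\ne 0$.

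The only step needing any care is Step 1, namely verifying that a factor of a factor fiber yields an element of $\cP$ below $a^\perp$. This follows from associativity of $\ell^2$-products and the fact that a factor projection is determined by its image. I expect no real obstacle here. The substantive input is completeness of $\cP$ and the infinite-product identification from Lemma~\ref{lem:complete-projections}.
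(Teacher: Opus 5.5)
Your argument is correct and is essentially the paper's route. The paper gets the proposition by citing the decomposition of a complete Boolean algebra into atomic and atomless parts, together with Lemma~\ref{lem:complete-projections} and the correspondence between atoms and irreducible factor fibers. Your Steps~1--3 simply verify directly the one part of that Boolean-algebra fact needed here: an atom below $a^{\perp}$ would also lie below $a$, hence be $0$.
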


\subsection{Compactness of the projection algebra}
We continue to assume that all factor projections commute and assume
in addition that $X$ is proper.

By Arzel\`a--Ascoli, every sequence $(p_i,q_i)$ of pairs of
complementary projections has a subsequence converging uniformly on
bounded sets. By continuity, the limiting $1$-Lipschitz maps $p,q$
define an isometric embedding $(p,q):X\to p(X)\times q(X)$.
The limiting pair $(p,q)$ is again a pair of complementary
projections. This gives the following:

\begin{lemma}\label{lem:proper-projections}
If $X$ is proper, then $\cP$ is compact and metrizable for the topology
of uniform convergence on bounded sets, and its Boolean operations are
continuous.
\end{lemma}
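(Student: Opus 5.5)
The plan is to embed $\cP$ as a subset of the space $C(X,X)$ of continuous self-maps with the topology of uniform convergence on bounded sets, and then show that it is closed inside the compact set of $1$-Lipschitz maps fixing $o$.

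\emph{Metrizability.} Since $X$ is proper, it is separable and $\sigma$-compact: write $X=\bigcup_n \bar B(o,n)$ with compact balls. The topology of uniform convergence on bounded sets restricted to $1$-Lipschitz maps is therefore given by the metric
\[
\sum_n 2^{-n}\min\Bigl\{1,\sup_{\bar B(o,n)}d(p,q)\Bigr\}.
\]
Hence $\cP$ is metrizable, and compactness is equivalent to sequential compactness.

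\emph{Compactness.} Let $p_i\in\cP$. Every $p_i$ is $1$-Lipschitz and fixes $o$, so each $p_i$ maps $\bar B(o,n)$ into the compact ball $\bar B(o,n)$. Arzel\`a--Ascoli together with a diagonal argument then gives a subsequence along which $p_i\to p$ and $p_i^\perp\to q$ uniformly on bounded sets. It remains to show that $p\in\cP$ with $p^\perp=q$. The following identities pass to the limit by pointwise convergence and equicontinuity:
\[
p\circ p=p,\qquad q\circ q=q,\qquad p\circ q=q\circ p=0,
\]
\[
d(x,y)^2=d(p(x),p(y))^2+d(q(x),q(y))^2.
\]
For instance, $p_i(p_i(x))\to p(p(x))$ because $d(p_i(p_i x),p(p x))\le d(p_ix,px)+d(p_i(px),p(px))$. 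Thus $\Psi=(p,q)\colon X\to A\times B$, with $A=p(X)$ and $B=q(X)$, is an isometric embedding. The main obstacle is surjectivity of $\Psi$: given $a=p(x)$ and $b=q(y)$, we must find $z$ with $p(z)=a$ and $q(z)=b$. We approximate. Choose $z_i$ with $p_i(z_i)=p_i(x)$ and $p_i^\perp(z_i)=p_i^\perp(y)$; such $z_i$ exist since $(p_i,p_i^\perp)$ is a product splitting. Then $r(z_i)^2=r(p_ix)^2+r(p_i^\perp y)^2$ is bounded, so by properness a subsequence satisfies $z_i\to z$. Uniform convergence on the bounded set containing the $z_i$ yields $p(z)=\lim p_i(z_i)=\lim p_i(x)=a$, and similarly $q(z)=b$. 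Therefore $\Psi$ is a surjective isometry, $X=A\times B$, and the associated factor projection onto the fiber $A$ through $o$ is exactly $p$: for $x=\Psi^{-1}(a,b)$ we have $p(x)=a=\Psi^{-1}(a,o)$ because $p(a)=a$ and $q(a)=0$. Hence $p\in\cP$ with complement $q$, and $\cP$ is compact.

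\emph{Continuity of the operations.} Complementation is continuous. Indeed, if $p_i\to p$, then every subsequential limit of $p_i^\perp$ is the complement of $p$ by the argument above, because factor projections are determined by their image. Compactness then forces $p_i^\perp\to p^\perp$. The meet $p\wedge q=p\circ q$ is continuous: for $1$-Lipschitz maps,
\[
d(p_iq_i x,pqx)\le d(q_ix,qx)+d(p_i(qx),p(qx)),
\]
and both terms are uniform on bounded sets because $q$ maps bounded sets to bounded sets. The join $p\vee q=(p^\perp\wedge q^\perp)^\perp$ is a composition of these continuous maps, so it is continuous as well.
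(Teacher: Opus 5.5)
Your proof is correct and follows essentially the same route as the paper: Arzel\`a--Ascoli applied to pairs $(p_i,p_i^\perp)$ of complementary projections, followed by checking that the limit pair $(p,q)$ is again complementary. You also supply details the paper leaves implicit, namely surjectivity of $(p,q)$ via the approximating points $z_i$ and properness, an explicit metric for the topology, and the continuity of complementation, meet and join.
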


By the theorem of Papert Strauss~\cite{PS68}, every compact Hausdorff
topological Boolean algebra is a powerset algebra and hence atomic;
see also \cite[Theorem~4.1]{BH16}. Together with
Proposition~\ref{prop:atomic-decomposition}, this gives the following.

\begin{proposition}\label{prop:proper-commuting-decomposition}
Let $X$ be a proper metric space whose factor projections at a
basepoint $o$ commute. Then the atoms $(e_i)_{i\in I}$ of $\cP$ give
an isometry
\begin{equation}\label{eq:proper-atomic-product}
 X\longrightarrow\prod_{i\in I}(e_i(X),o),
 \qquad x\longmapsto(e_i(x))_{i\in I},
\end{equation}
where $I$ is finite or countable and the factors are nontrivial,
irreducible, and proper. These factor fibers are uniquely determined
up to permutation.
\end{proposition}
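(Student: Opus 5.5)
We combine the three results just established: Lemma~\ref{lem:proper-projections}, the theorem of Papert Strauss, and Proposition~\ref{prop:atomic-decomposition}.

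\textbf{Step 1: the factors.} A proper space is complete, so Lemma~\ref{lem:complete-projections} and Proposition~\ref{prop:atomic-decomposition} apply. They give the splitting $X=X_{\mathrm{at}}\times X_{\mathrm{na}}$, and identify $X_{\mathrm{at}}=a(X)$ with $\prod_{i\in I}(e_i(X),o)$ through the coordinate map $x\mapsto(e_i(x))$. By Lemma~\ref{lem:proper-projections}, $\cP$ is a compact Hausdorff topological Boolean algebra. By Papert Strauss it is therefore atomic. If $a^{\perp}\neq0$, then $a^{\perp}$ would dominate some atom $e_i$. But $e_i\le a$, so $e_i\le a\wedge a^{\perp}=0$, a contradiction. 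Hence $a=1$, $X_{\mathrm{na}}=\{o\}$, and \eqref{eq:proper-atomic-product} is an isometry.

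Each factor $e_i(X)$ is a closed subset of $X$, being the fixed point set of the continuous map $e_i$. It is therefore proper. It is nontrivial and irreducible because the $e_i$ are atoms, by the correspondence stated in Subsection~\ref{subsec:boolean-structure}.

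\textbf{Step 2: countability of $I$.} This is the main point needing an argument. For each $n\in\mathbb N$, fix a point $x_i\in e_i(X)\setminus\{o\}$, and consider
\[
I_n=\{i : e_i(X)\text{ meets the sphere } \{r=1/n\}\}.
\]
Every $i$ lies in some $I_n$, provided each nontrivial factor meets arbitrarily small spheres. That need not hold for a discrete factor, so instead I bound $I$ by separability. A proper space is separable. The fibers $e_i(X)\setminus\{o\}$ are pairwise far apart in the following sense: for $x\in e_i(X)$ and $y\in e_j(X)$ with $i\ne j$,
\[
d(x,y)^2=r(x)^2+r(y)^2.
\]
Fix $\varepsilon>0$, and choose $x_i$ with $r(x_i)\ge\varepsilon$ inside the compact ball $\bar B(o,R)$. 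For $i\neq j$ this gives $d(x_i,x_j)\ge\sqrt2\,\varepsilon$. By compactness of $\bar B(o,R)$, only finitely many $i$ admit such a point. Every $i$ has some point $x_i$ with $0<r(x_i)$, so every $i$ qualifies for some pair $\varepsilon=1/n$, $R=n$. Hence $I$ is a countable union of finite sets.

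\textbf{Step 3: uniqueness.} The factor projections commute by hypothesis, so uniqueness of the factor fibers up to permutation follows directly from Corollary~\ref{cor:commuting-uniqueness}.

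The only genuinely nontrivial input is atomicity of $\cP$ in Step 1. That is supplied by the compactness statement of Lemma~\ref{lem:proper-projections} together with Papert Strauss, so that lemma is where care is needed. In particular, one must check that the limit pair $(p,q)$ in that lemma is surjective onto $p(X)\times q(X)$. This uses properness to find preimages as limits of preimages in bounded sets, and we take it as given.
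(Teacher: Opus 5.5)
Your proof is correct, and for countability of $I$ it takes a different route from the paper. Step~1 matches the paper: Papert Strauss gives atomicity of the compact Hausdorff algebra $\cP$, Proposition~\ref{prop:atomic-decomposition} gives the atomic splitting, and the fibers are closed. You also spell out why atomicity forces $a=1$, which the paper leaves implicit. For uniqueness you cite Corollary~\ref{cor:commuting-uniqueness}, whereas the paper appeals to the canonicity in Proposition~\ref{prop:atomic-decomposition}; both are fine.

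For countability, the paper argues inside $\cP$. Each atom $e$ is isolated, because $p\mapsto p\wedge e$ is continuous with values in $\{0,e\}$, so $\{p: e\le p\}$ is open. A compact metrizable space is second countable and so has only countably many isolated points. You argue inside $X$ instead. Distinct atoms are disjoint, so $e_j(X)\subseteq e_i^{\perp}(X)$, and points $x,y$ of different atom fibers satisfy $d(x,y)^2=r(x)^2+r(y)^2$. Representatives chosen in $\{1/n\le r\le n\}$ are then $\sqrt2/n$-separated points of a compact ball, hence finitely many.

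Your argument is more elementary and independent of the topology on $\cP$. It in fact shows that $I$ is countable whenever $X$ is separable, since a separated subset of a separable metric space is countable. The paper's argument is shorter once the compact metrizable topology on $\cP$ is in place, and that topology is needed anyway for atomicity.

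You should delete the abandoned first attempt with the spheres $\{r=1/n\}$, which plays no role in the final argument.
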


\begin{proof}
By Lemma~\ref{lem:proper-projections} and the preceding discussion, $\cP$
is compact, metrizable, and atomic. Its atoms form a discrete subspace:
for each atom $e$, the continuous map $p\mapsto p\wedge e$ takes values
in the discrete set $\{0,e\}$, so $\{p\in\cP:e\le p\}$ is open and
contains no other atom. Since $\cP$ is second countable, it has at most
countably many atoms. Proposition~\ref{prop:atomic-decomposition}
therefore gives \eqref{eq:proper-atomic-product}, and the atom fibers are
uniquely determined. They are closed subsets of $X$, hence proper.
The one-point case corresponds to the empty product.
\end{proof}

\section{Canonical decompositions and finite roots}
\label{sec:decompositions}

We now combine the commutation theorems with the projection algebra
of Section~\ref{sec:projections} to prove the decomposition and root
results stated in the introduction.

\subsection{The canonical decomposition}
The atomic decomposition from Section~\ref{sec:projections} applies
to the cofactor of the maximal Hilbert space factor.

\begin{theorem}\label{thm:canonical-decomposition}
Every complete metric space $X$ admits a decomposition
\begin{equation*}
 X\cong H\times\prod_{i\in I}(X_i,o_i)\times X_{\mathrm{na}},
\end{equation*}
where $H$ is a Hilbert space, the $X_i$ are nontrivial irreducible
complete metric spaces not isometric to $\R$, and $X_{\mathrm{na}}$
has no nontrivial irreducible factor. For each basepoint, all these
factor fibers are uniquely determined, up to permutation of the $X_i$.
\end{theorem}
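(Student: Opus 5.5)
Fix a basepoint $o\in X$. The plan is to split off the Hilbert factor first, apply the machinery of Sections~\ref{sec:complete} and~\ref{sec:projections} to the cofactor, and then check uniqueness of each piece.

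First apply \cite[Theorem~1.1]{FLS25} to write $X=H\times Y$ with $H$ a maximal Hilbert factor and $Y$ complete with no $\R$ factor. The fibers $H$ and $Y$ through $o$ are canonical by that theorem. Since $Y$ is a closed fiber, it is complete. Theorem~\ref{thm:refinement}, as proved in Section~\ref{sec:complete}, shows that all factor projections of $Y$ at $o$ commute. Lemma~\ref{lem:complete-projections} and Proposition~\ref{prop:atomic-decomposition} then give the canonical splitting
\[
Y=\prod_{i\in I}(e_i(Y),o)\times Y_{\mathrm{na}}.
\]
Here the $e_i$ are the atoms of $\cP(Y)$, and $Y_{\mathrm{na}}$ has no irreducible factor. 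Each $X_i=e_i(Y)$ is closed, hence complete, and irreducible. It is not isometric to $\R$, since otherwise $Y$ would have an $\R$ factor. Setting $X_{\mathrm{na}}=Y_{\mathrm{na}}$ gives existence.

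For uniqueness, take a second decomposition $X=H'\times\prod_j(X'_j,o'_j)\times X'_{\mathrm{na}}$ with the same properties, and identify its factors with their fibers through $o$. The key point is to show that $Y':=\prod_j X'_j\times X'_{\mathrm{na}}$ has no $\R$ factor. Then $H'$ is a Hilbert factor whose cofactor has no line factor, so $H'$ is maximal, and the uniqueness part of \cite{FLS25} gives $H'=H$ and $Y'=Y$ as fibers. Once this holds, all the $X'_j$ and $X'_{\mathrm{na}}$ are factor fibers of $Y$, where projections commute. The $X'_j$ are irreducible, so they are atoms of $\cP(Y)$. Let $a'=\bigvee_j e'_j$. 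Then $a'\le a$, and the complement $a\wedge a'^{\perp}$ projects onto a factor of $X'_{\mathrm{na}}$. If this complement were nonzero, it would lie above some atom $e_i$, because the algebra below $a$ is atomic by Lemma~\ref{lem:complete-projections}. That would give $X'_{\mathrm{na}}$ an irreducible factor, which is impossible. Hence $a'=a$, so $X'_{\mathrm{na}}=a^\perp(Y)=Y_{\mathrm{na}}$. The atoms coincide as a set, so the $X'_j$ are the $X_i$ up to permutation, as in Corollary~\ref{cor:commuting-uniqueness}.

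The main obstacle is showing that $Y'$ has no line factor. Suppose $Y'=L\times Z$ with $L\cong\R$. The projections of $Y'$ need not commute a priori, so I would run Lemma~\ref{lem:geometric-refinement} for the splittings $Y'=L\times Z$ and $Y'=X'_j\times(\text{cofactor})$ through a Boolean-algebra argument. A cleaner route is to show that the maximal Hilbert factor of a pointed product $\prod A_k$ is the product of the maximal Hilbert factors of the $A_k$. This could follow by restricting the Busemann function of the line, which is affine, to each factor fiber. One checks that each $X'_j$ has no $\R$ factor: a line factor of an irreducible space $\neq\R$ is impossible. Likewise $X'_{\mathrm{na}}$ has none, since $\R$ is itself irreducible. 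So the product has no line factor. I would try to prove this additivity of Hilbert factors directly from the canonicity in \cite{FLS25} applied to $X'_j\times W$ for the cofactor $W$.
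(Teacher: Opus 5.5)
Your existence argument is correct, and so is the Boolean-algebra part of uniqueness once $H'=H$ and $Y'=Y$ are known; both match the paper, which simply cites Proposition~\ref{prop:atomic-decomposition} for the latter. The gap is the step you yourself call the main obstacle. You never prove that $Y'=\prod_j X'_j\times X'_{\mathrm{na}}$ has no line factor, and ``So the product has no line factor'' does not follow from the factors having none. None of your suggested routes closes it:
\begin{enumerate}
\item Lemma~\ref{lem:geometric-refinement} needs the projections onto $L$ and onto $X'_j$ to commute. Theorem~\ref{thm:refinement} gives commutation only once you know $Y'$ has no line factor. Commutation is exactly what fails when line factors are present: take $\R^2=L_1\times L_1^{\perp}=L_2\times L_2^{\perp}$ with $L_1,L_2$ neither equal nor orthogonal. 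So this route is circular.
\item Appealing to the canonicity in \cite[Theorem~1.1]{FLS25} for $X'_j\times W$ is circular in the same way. To see that its Hilbert factor is trivial, you must already know that $X'_j\times W$ has no line factor.
\item ``The Busemann function is affine'' has no content here, since no geodesics are assumed. Even granting it, you would still need an argument that produces a splitting line of $X'_j$ from the restricted function.
\end{enumerate}

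The missing idea is the one the paper uses. By \cite[Proposition~3.2]{FLS25}, the projection of a splitting line of a product onto a factor without splitting lines is a point. Apply this to $Y'=X'_j\times(\text{cofactor})$ for every $j$ and to $Y'=X'_{\mathrm{na}}\times(\text{cofactor})$. Then all coordinates of the line are constant, so the line is a point, a contradiction.

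If you prefer to stay inside the paper, your Busemann idea can be made rigorous with $\Phi$ in place of affinity. For a unit-speed splitting line $\gamma$ through $o$, the function $h=k_{\gamma(1)}$ satisfies $\Phi(f+th)=\Phi(f)+2t\beta(f,h)+t^2$. Write $f=f_1+f_2$ with $f_1\in\im P$ and $f_2\in\ker P$. Lemma~\ref{lem:additivity} then shows that $\Phi(f_1+tPh)-\Phi(f_1)-2t\beta(f_1,h)$ does not depend on $f_1$. Hence $s\mapsto\Phi(f_1+sPh)$ satisfies a continuous Cauchy-type functional equation and is quadratic, so a nonzero $Ph$ is a quadratic direction. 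The translations constructed in the proof of Lemma~\ref{lem:realization} from it preserve the fiber $p(Y')$, because $x\in p(Y')$ exactly when $k_x\in\im P$. They therefore split a line off that fiber. Since $h\ne0$, some component $P_jh$ or $P_{\mathrm{na}}h$ is nonzero, which contradicts the hypotheses on $X'_j$ or on $X'_{\mathrm{na}}$.

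Without one of these arguments, the identification $H'=H$ is unproved, and with it the whole uniqueness statement.
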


\begin{proof}
By~\cite[Theorem~1.1]{FLS25}, split off the canonical Hilbert factor,
writing $X=H\times Y$. The complete space $Y$ has no splitting lines,
so its factor projections commute by Theorem~\ref{thm:refinement}.
Proposition~\ref{prop:atomic-decomposition} gives the remaining decomposition.

A product of spaces without splitting lines has no splitting lines,
since the projection of a splitting line onto each factor would be a
point by~\cite[Proposition~3.2]{FLS25}. Thus, in any decomposition of the
stated form, $H$ is the canonical Hilbert factor, and
Proposition~\ref{prop:atomic-decomposition} gives uniqueness of the
remaining factor fibers.
\end{proof}

We can now finish the proof of Theorem~\ref{thm:proper}:

\begin{proof}[Proof of Theorem~\ref{thm:proper}]
Consider the decomposition of $X$ provided by
Theorem~\ref{thm:canonical-decomposition}. All factor fibers are closed
in $X$, hence proper. Thus the Hilbert factor $H$ is finite-dimensional
and hence Euclidean. Apply
Proposition~\ref{prop:proper-commuting-decomposition} to the cofactor of
$H$, whose factor projections commute by Theorem~\ref{thm:refinement}.
It follows that $I$ is finite or countable and $X_{\mathrm{na}}$ is a
one-point space. Uniqueness follows from
Theorem~\ref{thm:canonical-decomposition}.
\end{proof}

\subsection{Finite roots of proper spaces}
\label{subsec:proper-roots}

We now prove uniqueness of finite roots of proper spaces, providing an
affirmative answer to Ulam's question in this setting. Some care is
needed, since we do not assume that our isometries preserve basepoints.

\begin{proof}[Proof of Theorem~\ref{thm:roots}]
The case $n=1$ is immediate, so assume $n\ge2$. Let $A,B$ be proper
metric spaces with $A^n\cong B^n$. Consider the canonical decompositions
\[
 A=\R^k\times A_0,\qquad B=\R^l\times B_0,
\]
where $A_0,B_0$ have no splitting lines. The canonical Hilbert factors
of $A^n$ and $B^n$ are $\R^{nk}$ and $\R^{nl}$,
by~\cite[Theorem~1.1 and Proposition~3.2]{FLS25}.
Hence $k=l$, and uniqueness of the Hilbert splitting gives
\[
 A_0^n\cong B_0^n.
\]
Thus it suffices to consider the case in which $A$ and $B$ have no
splitting lines. By Theorem~\ref{thm:proper}, choose basepoints $a\in A$,
$b\in B$ and decompositions
\[
 (A,a)=\prod_{i\in I}(A_i,a_i),\qquad
 (B,b)=\prod_{j\in J}(B_j,b_j)
\]
into irreducible factors, where $I,J$ are finite or countable. Fix an
isometry $F:A^n\to B^n$. Compare the images under $F$ of the factor
fibers through $(a,\ldots,a)$ with the target factor fibers through
$F(a,\ldots,a)$. Uniqueness gives a bijection
\[
 \sigma:I\times\{1,\ldots,n\}\longrightarrow
        J\times\{1,\ldots,n\}.
\]
If $\sigma(i,k)=(j,l)$, reading the corresponding target coordinate
of the restriction of $F$ gives an isometry
$\phi_{i,k}:A_i\to B_j$. Put
$c_{i,k}=d(\phi_{i,k}(a_i),b_j)$. Then
\begin{equation}\label{eq:root-displacements}
 \sum_{i\in I}\sum_{k=1}^n c_{i,k}^2
 =d\bigl(F(a,\ldots,a),(b,\ldots,b)\bigr)^2<\infty.
\end{equation}

The bijection $\sigma$ defines an $n$-regular bipartite multigraph
with vertex sets $I,J$: each equality $\sigma(i,k)=(j,l)$ contributes
one edge from $i$ to $j$. Since $n$ is positive and finite, this graph
admits a perfect matching by the locally finite extension of Hall's
marriage theorem~\cite{Hall35}; see~\cite[Theorem~4.2]{Lac91}
and~\cite[Theorem~4.2]{Lac02}, both allowing multiple edges.

Let $\tau:I\to J$ be the resulting bijection. For each $i$, choose the
pair $(k,l)$ corresponding to its matched edge, so that
$\sigma(i,k)=(\tau(i),l)$, and use the corresponding isometry
$\phi_i:A_i\to B_{\tau(i)}$.
By \eqref{eq:root-displacements},
\[
 \sum_i d(\phi_i(a_i),b_{\tau(i)})^2<\infty.
\]
The inverse coordinate isometries have the same displacements.
Thus the coordinatewise product of the $\phi_i$ is a surjective
isometry $A\to B$.
\end{proof}

\end{document}